\renewcommand{\phi}{\varphi}
\newcommand{\C}{{\mathbb{C}}}
\newcommand{\HyperbolicPlane}{{\mathbb{H}}}
\newcommand{\N}{{\mathbb{N}}}
\newcommand{\R}{{\mathbb{R}}}
\newcommand{\Z}{{\mathbb{Z}}}
\newcommand{\1}{{\mathbf{1}}}
\renewcommand{\epsilon}{\varepsilon}
\renewcommand{\theta}{\vartheta}
\renewcommand{\S}{{\mathbb{S}}}
\newcommand{\T}{{\mathbb{T}}}
\newcommand{\Disk}[1][{}]{{\mathbb{D}_{#1}}}
\newcommand{\Reeb}{{X_\mathrm{Reeb}}}
\newcommand{\solutions}{{\widetilde{\mathcal{M}}}}
\newcommand{\moduli}{{\mathcal{M}}}
\newcommand{\z}{{\mathbf{z}}}
\newcommand{\lie}[1]{{\mathcal{L}_{#1}}}
\newcommand{\pairing}[2]{{\langle{#1}|{#2}\rangle}}
\newcommand{\plastikstufe}[1]{{\mathcal{PS}({#1})}}
\newcommand{\GPS}{{\mathrm{GPS}}}
\newcommand{\norm}[1]{{\lVert #1\rVert}}
\newcommand{\abs}[1]{{\left\lvert #1\right\rvert}}
\newcommand{\selfintersection}[1]{{{#1}_{\between}}}
\newcommand{\overtwisted}{{\mathbb{D}_\mathrm{OT}}}
\newcommand{\lcan}{{\lambda_{\mathrm{can}}}}
\newcommand{\POS}{{\mathbf{q}}}
\newcommand{\MOM}{{\mathbf{p}}}
\DeclareMathOperator{\End}{End}
\DeclareMathOperator{\evaluationmap}{ev}
\DeclareMathOperator{\id}{id}
\DeclareMathOperator{\ImaginaryPart}{Im}
\DeclareMathOperator{\RealPart}{Re}
\DeclareMathOperator{\SU}{SU}
\theoremstyle{plain}
\newtheorem{theorem}{Theorem}
\newtheorem{proposition}[theorem]{Proposition}
\newtheorem{corollary}[theorem]{Corollary}
\newtheorem{question}{Question}
\theoremstyle{remark}
\newtheorem{remark}{Remark}
\newtheorem{example}{Example}
\theoremstyle{definition}
\newtheorem*{defi}{Definition}
\begin{document}

\title[Size of neighborhoods in contact topology]{Some remarks on the
  size of tubular neighborhoods in contact topology and fillability}

\author[K.\ Niederkrüger]{Klaus Niederkrüger}

\email[K.\ Niederkrüger]{kniederk@umpa.ens-lyon.fr}

\address[K.\ Niederkrüger]{École Normale Supérieure de Lyon\\
  Unité de Mathématiques Pures et Appliquées\\
  UMR CNRS 5669\\ 46 allée d'Italie\\ 69364 LYON Cedex 07\\
  France}

\author[F.\ Presas]{Francisco Presas}

\email[F.\ Presas]{fpresas@imaff.cfmac.csic.es}

\address[F.\ Presas]{Departamento de Álgebra \\
  Facultad de Matemáticas \\ Universidad Complutense de Madrid \\
  Plaza de Ciencias n\textordmasculine~3 \\ 28040 Madrid \\ Spain}

\begin{abstract}
  The well-known tubular neighborhood theorem for contact submanifolds
  states that a small enough neighborhood of such a submanifold $N$ is
  uniquely determined by the contact structure on $N$, and the
  conformally symplectic structure of the normal bundle.  In
  particular, if the submanifold $N$ has trivial normal bundle then
  its tubular neighborhood will be contactomorphic to a neighborhood
  of $N\times\{\mathbf{0}\}$ in the model space $N\times \R^{2k}$.

  In this article we make the observation that if $(N,\xi_N)$ is a
  $3$--dimensional overtwisted submanifold with trivial normal bundle
  in $(M,\xi)$, and if its model neighborhood is sufficiently large,
  then $(M,\xi)$ does not admit an exact symplectic filling.
\end{abstract}

\maketitle

In symplectic geometry many invariants are known that measure in some
way the ``size'' of a symplectic manifold.  The most obvious one is
the total volume, but this is usually discarded, because one can
change the volume (in case it is finite) by rescaling the symplectic
form without changing any other fudamental property of the manifold.
The first non-trivial example of an invariant based on size is the
symplectic capacity \cite{Gromov_Kurven}.  It relies on the fact that
the size of a symplectic ball that can be embedded into a symplectic
manifold does not only depend on its total volume but also on the
volume of its intersection with the symplectic $2$--planes.

Contact geometry does not give a direct generalization of these
invariants.  The main difficulties stem from the fact that one is only
interested in the contact structure, and not in the contact form, so
that the total volume is not defined, and to make matters worse the
whole Euclidean space $\R^{2n+1}$ with the standard structure can be
compressed by a contactomorphism into an arbitrarily small open ball
in $\R^{2n+1}$.

A more successful approach consists in studying the size of the
neighborhood of submanifolds.  This can be considered to be a
generalization of the initial idea since contact balls are just
neighborhoods of points.  In the literature this idea has been pursued
by looking at the tubular neighborhoods of circles.  Let
$(N,\alpha_N)$ be a closed contact manifold.  The product $N\times
\R^{2k}$ carries a contact structure given as the kernel of the form
$\alpha_N + \sum_{j=1}^k (x_j\,dy_j - y_j\,dx_j)$, where
$(x_1,\dotsc,x_k,y_1,\dotsc,y_k)$ are the coordinates of the Euclidean
space.  If $(N,\alpha_N)$ is a contact submanifold of a manifold $(M,
\alpha)$ that has trivial (as conformal symplectic) normal bundle,
then one knows by the tubular neighborhood theorem that $N$ has a
small neighborhood in $M$ that is contactomorphic to a small
neighborhood of $N\times\{\mathbf{0}\}$ in the product space $N\times
\R^{2k}$.

The contact structure on a solid torus $V$ in $\S^1\times\R^2$ depends
in an intricate way on the radius of $V$ \cite{EliashbergSolidTori}.
Later, examples of transverse knots in $3$--manifolds were found whose
maximal neighborhood is only contactomorphic to a small disk bundle in
$\S^1\times \R^2$ \cite{Cabling}.  This is proved by measuring the
slope of the characteristic foliation on the boundary of cylinders.

A different approach has been taken in \cite{NonContactSqueezing}.
There it has been shown that a solid torus around
$\S^1\times\{\mathbf{0}\}$ in $\S^1\times\R^{2k}$ of radius $R$ cannot
be ``squeezed'' into a solid torus of radius $r$, if $k\ge 2$, $R>2$
and $r<2$.  However note that squeezing in the context of
\cite{NonContactSqueezing} is different from the expected definition,
and refers to the question of whether one subset of a contact manifold
can be deformed by a global isotopy into another one.

The observation on which the present article is based is that
sufficiently large neighborhoods of $N\times\{\mathbf{0}\}$ in
$N\times \R^{2k}$ contain a generalized plastikstufe (for a definition
of the GPS see Section~\ref{sec: definition GPS}), if $N$ is an
overtwisted $3$--manifold.  The construction of a GPS in a tubular
neighborhood is explained in Section~\ref{sec: construction of GPS in
  tubular nbhd}.  In Section~\ref{sec: proof of nonfillability}, we
show that the existence of a GPS implies nonfillability, and so it
follows in particular that an overtwisted contact manifold that is
embedded into a fillable manifold cannot have a ``large''
neighborhood.

Unfortunately, the definition of ``large'' is rather subtle and does
not lead to a numerical invariant, because such an invariant would
depend on the contact form on the submanifold.  One could simply
multiply any contact form $\alpha_N + \sum_{j=1}^k (x_j\,dy_j -
y_j\,dx_j)$ by a constant $\lambda > 0$, and then rescale the radii in
the plane by a transformation $r_j \mapsto r_j/\sqrt{\lambda}$ to
change the numerical invariant.

\subsection*{Acknowledgments}
K.\ Niederkrüger works at the \emph{ENS de Lyon} funded by the project
\emph{Symplexe} of the \emph{Agence Nationale de la Recherche} (ANR).

Several people helped us writing this article: Many ideas and examples
are due to Emmanuel Giroux (in particular Example~\ref{example:
  overtwisted in fillable manifold}).  Example~\ref{example: branched
  cover} was found in discussions with Hansjörg Geiges.  We thank Ana
Rechtman Bulajich for helping us clarifying the general ideas of the
paper, and Mohammed Abouzaid and Pierre Py for extremely valuable
discussions about holomorphic curves.  Paolo Ghiggini pointed out to
us known results about neighborhoods of transverse knots.

\section{Examples}

First we give an easy example that shows that embedding an overtwisted
$3$--manifold into a fillable contact manifold does not pose a
fundamental problem in positive codimension.

\begin{example}\label{example: overtwisted in fillable manifold}
  Let $M$ be an arbitrary orientable closed $3$--manifold.  Its unit
  cotangent bundle $\S\bigl(T^* M\bigr) \cong M\times \S^2$ has a
  contact structure defined by the canonical $1$--form.  The cotangent
  bundle $T^*M$ together with the form $d\lcan$ is an exact symplectic
  filling (and in fact, it can even be turned into a Stein filling).

  Any contact manifold $(M,\alpha)$ can be embedded into
  $\bigl(\S\bigl(T^* M\bigr), \lcan\bigr)$ just by normalizing
  $\alpha$ so that $\norm{\alpha} = 1$.  This defines a section in
  $\sigma:\, M \to \S\bigl(T^* M\bigr)$ with $\sigma^* \lcan =
  \alpha$.  This means that every (and in particular also every
  overtwisted overtwisted one) contact $3$--manifold can be embedded
  into a Stein fillable contact $5$--manifold.
\end{example}

Embedding a contact $3$--manifold into a contact manifold of dimension
$7$ or higher restricts by using the $h$--principle and a general
position argument to a purely topological question.

Our second and third example show that contact submanifolds can have
infinitely large tubular neighborhoods.

\begin{example}
  Let $(M,\alpha)$ be an arbitrary contact manifold, and let
  $(\S^{2n-1}, \xi_0)$ be the standard sphere.  If $\dim M \ge 2n-1$,
  then it is easy to give a contact embedding
  \begin{equation*}
    \Bigl(\S^{2n-1}\times \R^{2k}, \alpha_0
    + \sum_{j=1}^k (x_j\,dy_j -  y_j\,dx_j)\Bigr) \hookrightarrow
    \bigl(M, \alpha\bigr) \;.
  \end{equation*}
  The proof works in two steps.  For the embedding
  \begin{equation*}
    \Bigl(\S^{2n-1}\times \R^{2k}, \alpha_0 +
    \sum_{j=1}^k (x_j\,dy_j -  y_j\,dx_j)\Bigr) \hookrightarrow
    \bigl(\S^{2n+2k - 1}, \alpha_0\bigr)
  \end{equation*}
  simply use the map $(z_1,\dotsc,z_n;x_1,y_1,\dotsc,x_k,y_k) \mapsto
  \frac{1}{\sqrt{1+\norm{\mathbf{x}}^2 + \norm{\mathbf{y}}^2}} \,
  (z_1,\dotsc, z_n,x_1+iy_1,\dotsc,x_k+iy_k)$.  Since $(\S^{2N - 1},
  \alpha_0)$ with one point removed is contactomorphic to $\R^{2N-1}$
  with standard contact structure (see for example
  \cite[Proposition~2.13]{Geiges_Handbook}) and since it is possible
  to embed the whole $\R^{2N-1}$ into an arbitrary small Darboux chart
  (see for example \cite[Proposition~3.1]{ContactBallCoverings}), it
  follows that a general $(M,\alpha)$ contains embeddings of
  $\bigl(\S^{2n-1}\times \R^{2k}, \alpha_0 + \sum_{j=1}^k (x_j\,dy_j -
  y_j\,dx_j)\bigr)$.
\end{example}

\begin{example}
  A generalization is obtained by choosing a contact manifold
  $(N,\alpha_N)$ that has an exact symplectic filling $(W,\omega =
  d\lambda)$.  The Lioville field $X_L$ is globally defined (see
  Section~\ref{sec: definition fillability}), and we can use its
  negative flow for finding an embedding of the lower half of the
  symplectization $(-\infty,0]\times N$ where $\lambda$ pulls back to
  $e^t\,\alpha_N$.  The manifold $\S^1\times W$ is together with the
  $1$--form $d\theta + \lambda$ a contact manifold.

  The standard model $(N\times\R^2,\alpha_N + r^2\,d\phi)$ can be
  glued outside the $0$--section onto $\S^1\times W$, and this
  construction yields a closed contact manifold that contains the
  embedding of $N\times \R^2$.  This example can be seen as an open
  book with binding $N$, page $W$, and trivial monodromy.
\end{example}

Not much is known about the different contact structures on
$\R^{2n+1}$ for $n\ge 2$.  There exists the standard contact structure
$\xi_0$, and many different constructions to produce structures that
are not isomorphic to the standard one (for example
\cite{BatesPeschke, MullerExoticR6, NiederkruegerPlastikstufe}).
Unfortunately we do not have the techniques to decide whether these
exotic contact structures are different from each other.  A contact
structure $\xi$ on $\R^{2n+1}$ is called \textbf{standard at infinity}
\cite{EliashbergContactR3}, if there exists a compact subset $K$ of
$\R^{2n+1}$ such that $\bigl(\R^{2n+1}-K, \xi\bigr)$ is
contactomorphic to $\bigl(\R^{2n+1}-\Disk[R],\xi_0\bigr)$ for a closed
disk of an arbitrary radius $R$.  A contact structure $\xi$ on
$\R^{2n+1}$ only admits a one-point compactificaton to a contact
structure on the sphere, if $\xi$ is standard at infinity.  For most
exotic contact structures it is not known whether they are standard at
infinity or not.  The only exception known to us so far was given in
\cite{PSOvertwistedEverywhere}, where by removing one point from the
sphere, we obtained an exotic contact structure $\xi_{PS}$ on
$\R^{2n+1}$ that \emph{is} standard at infinity (but see also
Example~\ref{example: branched cover}).  A rather degenerate way of
producing a contact structure that is not standard at infinity
consists in taking the standard structure on $\R^{2n+1}$, and do the
connected sum at every point $(0,\dotsc,0,k)\in \R^{2n+1}$ with
$k\in\Z$ with the sphere $(\S^{2n+1}, \xi_{PS})$.

Corollary~\ref{GPS in manifolds with overtwisted blah} below yields a
very explicit way to construct an exotic contact structure that is not
standard at infinity.

\begin{example}
  The contact manifold
  \begin{equation*}
    \bigl(\R^3\times \C^k, \alpha_- + \sum_{j=1}^k r_j^2\,d\theta_j\bigr) \;,
  \end{equation*}
  where $(r_j,\theta_j)$ are polar coordinates on the $j$--th factor
  of $\C^k$, does not embed into the standard sphere, and is hence not
  contactomorphic to the standard contact structure on $\R^{2k+3}$.
  Let $K\subset \R^3\times\C^k$ be an arbitrary compact subset.  By
  the same argument, it is easy to see that $\bigl(\R^3\times \C^k -
  K, \alpha_- + \sum_j r_j^2\,d\theta_j\bigr)$ still contains a GPS,
  so in particular it cannot be embedded into a ``punctured'' set $U
  -\{p\}\subset \bigl(\R^{2k+3},\alpha_0\bigr)$ with the standard
  contact structure.  It follows that $\bigl(\R^3\times \C^k, \alpha_-
  + \sum_j r_j^2\,d\theta_j\bigr)$ is ``non standard at infinity''.
\end{example}

Let $(M,\alpha)$ be a closed contact manifold that contains a contact
submanifold $N$ of codimension~$2$ with trivial normal bundle.  A
$k$--fold \textbf{contact branched covering} over $M$ consists of a
closed manifold $\widetilde M$, and a smooth surjective map $f:\,
\widetilde M \to M$ such that the map $f$ is a smooth $k$--fold
covering over $M-N$, and there is an open neighborhood $\widetilde
U\subset \widetilde M$ of $f^{-1}(N)$ diffeomorphic to $N\times
\Disk_\epsilon$, and a neighborhood $U\subset M$ of $N$ diffeomorphic
to $N\times \Disk_{\epsilon^k}$ such that the map $f$ takes the form
\begin{equation*}
  f:\, N\times \Disk_\epsilon \to N\times \Disk_{\epsilon^k},\,
  (p,z) \mapsto (p,z^k)\;,
\end{equation*}
when restricted to $\widetilde U$  (see \cite{GonzaloBranchedCover}).

Using the branched covering, it is easy to define a contact structure
on $\widetilde M$.  First isotope $\alpha$ in such a way that it
becomes $\left.\alpha\right|_{TN} + r^2\,d\phi$ on a subset $N\times
\Disk_\delta \subset N\times \Disk_{\epsilon^k}$ for some $\delta >
0$.  The pull-back $\widetilde \alpha := f^*\alpha$ defines on
$\widetilde M$ a $1$--form that satisfies away from $f^{-1}(N)$
everywhere the contact property.  Over the branching locus
$f^{-1}(N)$, there is a subset $N\times \Disk_{\sqrt[k]{\delta}}$ in
$\widetilde U$ where $\widetilde \alpha$ evaluates to
$\left.\alpha\right|_{TN} + kr^{2k}\,d\phi$.

Remove the fiber $N\times \{0\}$ from $\widetilde U$ and glue in
$N\times \Disk_{\delta}$ via the map $F:\,(p,re^{i\phi}) \mapsto
(p,\sqrt[k]{r}\, e^{i\phi})$ along $N\times
\bigl(\Disk_{\sqrt[k]{\delta}} - \{0\}\bigr)$.  The pull-back
$F^*\widetilde \alpha$ yields $\left.\alpha\right|_{TN} + kr^2\,d\phi$
on the punctured disk bundle, which we can easily extend to the whole
patch we are gluing in.  We denote this slightly modified contact form
again by $\widetilde\alpha$.  By using a linear stretch map on the
disk, we finally obtain that the submanifold $f^{-1}(N) \cong N$ has
with respect to the model form
\begin{equation*}
  \bigl(N\times \R^2, \left.\alpha\right|_{TN} + r^2\,d\phi \bigr)
\end{equation*}
a neighborhood inside $(\widetilde M, \widetilde \alpha)$ that is at
least of size $\sqrt{k}\,\delta$.

\begin{example}\label{example: branched cover}
  There is an interesting contact structure on the odd dimensional
  spheres $\S^{2n-1}\subset \C^n$ given as the kernel of the
  $1$--forms
  \begin{equation*}
    \alpha_- = i\,\sum_{j=1}^n \bigl(z_j\, d\bar z_j - \bar z_j\,dz_j\bigr)
    - i \, \bigl(f\,d\bar f - \bar f \, df\bigr)\,
  \end{equation*}
  with $f(z_1,\dotsc,z_n) = z_1^2 + \dotsm + z_n^2$.  This form is
  compatible with the open book with binding $B=f^{-1}(0)$, and
  fibration map $\theta = \bar f/\abs{f}$.  In abstract terms, this is
  the open book with page $P \cong T^*\S^{n-1}$ and monodromy map
  corresponding to the negative Dehn-Seidel twist.

  An interesting feature of these spheres is that they can be stacked
  into each other via the natural inclusions $\S^3 \hookrightarrow
  \S^5 \hookrightarrow \S^7\hookrightarrow \dotsm$ respecting the
  contact form, and that $(\S^3,\alpha_-)$ is overtwisted.

  We find a contact branched cover $f:\,\S^5 \to (\S^5,\alpha_-)$
  given by $f(z_1,z_2,z_3) = \frac{(z_1,z_2,z_3^k)}
  {\norm{(z_1,z_2,z_3^k)}}$ that is branched along $\S^3$.  By
  choosing $k$ large enough, we will obtain with the construction
  described above a contact structure on $\S^5$ that contains an
  embedding of $(\S^3,\alpha_-)$ with an arbitrary large neighborhood.
  According to Corollary~\ref{GPS in manifolds with overtwisted blah}
  and Theorem~\ref{theorem: main result}, this contact structure will
  not admit an exact symplectic filling.

  This result is unsatisfactory, since we do not get an explicit value
  for $k$.  In fact, we expect that $(\S^3,\alpha_-)$ already has a
  large neighborhood in any of the spheres $(\S^{2n-1},\alpha_-)$ so
  that taking $k=1$ (that means not taking any branched covering at
  all) should already be sufficient.
\end{example}

\newcommand{\thisgoesout}{
\begin{example}
  Consider $(\S^3,\alpha_0)$ embedded in its unit cotangent bundle as
  described above.  One explicit model of the unit cotangent bundle
  can be obtained by embedding the sphere $\S^{2n-1}$ into $\R^{2n}$
  where it satisfies the equation $\norm{\POS} = 1$ with $\POS =
  (q_1,\dotsc,q_{2n})$.  Using the standard metric, the cotangent
  bundle can be identified with the subset $\bigl\{(\POS,\MOM) \in
  \R^{2n}\times \R^{2n} \bigm|\,\norm{\POS} = 1, \pairing{\POS}{\MOM}
  = 0\bigr\}$, and finally the unit cotangent bundle
  $\S(T^*\S^{2n-1})$ is given by
  \begin{equation*}
    \bigl\{(\POS,\MOM) \in \R^{2n}\times \R^{2n} \bigm|\,
    \norm{\POS} = \norm{\MOM} = 1,\, \pairing{\POS}{\MOM} = 0\bigr\} \;.
  \end{equation*}
  The canonical contact form $\lcan = \MOM\,d\POS$ can also be written
  as $\frac{1}{2}\,(\MOM\,d\POS - \POS\,d\MOM)$.  The embedding of
  $\S^{2n-1}$ into its unit cotangent bundle given by the contact form
  $\alpha_0$ is
  \begin{equation*}
    F:\, (x_1,x_2,\dotsc,x_{2n-1},x_{2n}) \mapsto
    \bigl((x_1,x_2,\dotsc,x_{2n-1},x_{2n});
    (-x_2,x_1,\dotsc,-x_{2n},x_{2n-1})\bigr)\;.
  \end{equation*}

  We can introduce complex coordinates $z_j = q_j - i p_j$ such that
  $\R^{2n} \times \R^{2n}$ becomes $\C^{2n}$, the canonical contact
  structure becomes $-\frac{i}{4}\, (\z\,d\bar\z - \bar \z\, d\z)$,
  and the cotangent bundle can be conveniently described by the
  equations $\norm{\z}^2 = 2$, and $z_1^2 + \dotsm + z_{2n}^2 = 0$,
  because $z_1^2 + \dotsm + z_{2n}^2 = \norm{\POS}^2 - \norm{\MOM}^2 -
  2i\,\pairing{\POS}{\MOM}$.  The embedding $F$ is now
  \begin{equation*}
    F:\, (z_1,\dotsc,z_n) \mapsto (z_1, i z_1,\dotsc, z_n,i z_n)\;.
  \end{equation*}

  The linear map on $\C^{2n}$ given by blocks of the form
  $\frac{1}{\sqrt{2}}\,\begin{pmatrix} 1 & -i \\ -i & 1 \end{pmatrix}$
  lies in $\SU(n)$, and hence preserves the canonical contact
  structure.  Apply this transformation to $\C^{2n}$, then the
  equations that define the unit cotangent bundle become $\norm{\z}^2
  = 2$, and $z_1z_2+ z_3z_4 + \dotsm + z_{2n-3}z_{2n-2} +
  z_{2n-1}z_{2n} = 0$, and the embedding $F$, can be written as
  \begin{equation*}
    F:\, (z_1,\dotsc,z_n) \mapsto \sqrt{2}\,(z_1, 0,\dotsc, z_n,0)\;.
  \end{equation*}

  If we restrict to the standard $3$--sphere, and its unit cotangent
  bundle, we can define the following embedding of $\S^3\times \C$
  into $\S(T^*\S^3)\subset \C^4$:
  \begin{equation*}
    \Phi:\, \bigl((z_1,z_2); w\bigr) \mapsto
    \frac{\sqrt{2}}{\sqrt{1+\abs{w}^2}}\,
    \bigl(z_1, -wz_2,z_2,wz_1\bigr)\;.
  \end{equation*}
  The pull-back of $\lcan = -\frac{i}{4}\, (\z\,d\bar\z - \bar \z\,
  d\z)$ by $\Phi$ gives
  \begin{equation*}
    \Phi^*\lcan = -\frac{i}{2}\,\bigl(z_1\,d\bar z_1 - \bar z_1\,d z_1
    + z_2\,d\bar z_2 - \bar z_2\,d z_2\bigr) -\frac{i}{2\,(1+\abs{w}^2)}\,
    \bigl(w\,d\bar w - \bar w\,d w\bigr)\; ,
  \end{equation*}
  and by combining this with the stretch map $\Disk \to \C, \,
  w\mapsto w/\sqrt{1-\abs{w}^2}$ we obtain the diffeomorphism that
  gives the desired pull-back
  \begin{equation*}
    \S^3\times \Disk \hookrightarrow \S(T^*\S^3), \,
    \bigl((z_1,z_2); w\bigr) \mapsto \Phi \Bigl((z_1,z_2);
    w/\sqrt{1-\abs{w}^2}\Bigr) =  \sqrt{2}\,
    \Bigl(\sqrt{1-\abs{w}^2}\,z_1,
    -wz_2,\sqrt{1-\abs{w}^2}\,z_2,wz_1\Bigr) \;.
  \end{equation*}
\end{example}

\begin{example}
  Let $(N,\alpha_N)$ be a closed contact manifold.  In
  \cite{BourgeoisTori} it was shown that using a compatible open book
  decomposition of $(N,\alpha_N)$, one finds functions $f,g:\, N\to\R$
  such that the $1$--form
  \begin{equation*}
    \alpha_\epsilon = \alpha_N + \epsilon\,\bigl(f\,dx + g\,dy\bigr)\;,
  \end{equation*}
  with $\epsilon\ne 0$ defines a contact structure on $N\times\T^2$,
  and for different values of $\epsilon$ all of these structures are
  equivalent.  Obviously the fibres $N\times\{(x,y)\}$ are
  contactomorphic to $(N,\alpha_N)$ so that we have a contact
  fibration $N\times \T^2 \to \T^2$.

  We will now show that the radius of the neighborhood of a fibre is
  $\infty$ that means we can embed $\bigl(N\times \R^2, \alpha_N +
  r^2\,d\phi\bigr)$ into $N\times \T^2$ such that $N\times\{0\}$
  corresponds to the fibre $N\times\{(x,y)\}$.  The proof works in
  several steps.  First we embed a disk $\Disk[\delta]$ into the base
  manifold $\T^2$.  We take the radial rays and lift them using the
  parallel transport.  This already will provide use a model $N\times
  \Disk[\delta]$ on which the contact form will be $\alpha_\epsilon =
  F_\epsilon\, \alpha_N + G_\epsilon\,d\phi$ with certain functions
  $F_\epsilon, G_\epsilon:\, N\times \Disk[\delta] \to \R$.  The
  reason why $\alpha_\epsilon$ has no $dr$--component after the
  transformation is that the rays on the disk follow the parallel
  transport.  The function $F_\epsilon$ does not vanish, and we can
  divide by it to obtain the new contact form $\alpha_N + \widehat
  G_\epsilon\,d\phi$ that already comes close to the desired form
  $\alpha_N + r^2\,d\phi$.  In fact, one obtains the desired form from
  our one, if one pulls back with the inverse of the map
  \begin{equation*}
    \bigl(p; re^{i\phi}\bigr)\in N\times \Disk[\epsilon] \mapsto
    \bigl(p;\, \bigl(\widehat G_\epsilon\bigr)^{1/2}\,e^{i\phi}\bigr) \;.
  \end{equation*}
  The reason that this is a diffeomorphism is that
  $\frac{\partial}{\partial r} \bigl(\widehat G_\epsilon\bigr)^{1/2}
  \ne 0$ everywhere, because otherwise the initial $1$--form could not
  be a contact form.  Now it suffices to look at the minimum of
  $\bigl(\widehat G_\epsilon\bigr)^{1/2}$ on $\partial\Disk[\delta]$.

  To perform the first step pull back $\alpha_\epsilon$ to $N\times
  \Disk[\delta]\times \Disk[\delta]$ using the map $N\times
  \Disk[\delta] \times \Disk[\delta] \to N\times \T^2, \,
  \bigl(p;(u,v);(x,y)\bigr) \mapsto (p;(x,y))$.  We will define a
  vector field $Y$ on $N\times \Disk[\delta] \times \Disk[\delta]$
  whose time~$1$ flow restricted to $N\times\Disk[\delta]\times \{0\}$
  will give the desired lift by the parallel transport.  Let
  $Y_\epsilon$ be
  \begin{equation*}
    Y_\epsilon\bigl(p;(u,v);(x,y)\bigr) = u\,\partial_x + v\,\partial_y
    - \epsilon\,(uX_f + vX_g)\,
  \end{equation*}
  where $X_f$ and $X_g$ are the contact vector fields for the
  functions $f$ and $g$ on $(N,\alpha_N)$ respectively, i.e., $X_f$
  and $X_g$ are vector fields parallel to the fibers that satistfy the
  equations
  \begin{equation*}
    \alpha_N(X_f) = f \quad\text{ and }\quad
    d\alpha_N(X_f,-) = -df + \bigl(\lie{\Reeb}f\bigr)\,\alpha_N \;,
  \end{equation*}
  and analogous equations for $X_g$ and the function $g$.  In
  particular it follows that $i_{Y_\epsilon}\alpha = 0$.  We want to
  pull back $\alpha_\epsilon$ with the time $1$ flow of $Y_\epsilon$.
  \begin{equation*}
    \begin{split}
      \bigl(\Phi^1_{Y_\epsilon}\bigr)^* \alpha_\epsilon -
      \alpha_\epsilon &= \int_0^1
      \left(\frac{d}{dt}\bigl(\Phi^t_{Y_\epsilon}\bigr)^*
        \alpha_\epsilon\right)\,dt = \int_0^1 \left(
        \bigl(\Phi^t_{Y_\epsilon}\bigr)^*
        \lie{Y_\epsilon}\alpha_\epsilon\right)\, dt = \int_0^1 \left(
        \bigl(\Phi^t_{Y_\epsilon}\bigr)^*
        \iota_{Y_\epsilon} d \alpha_\epsilon\right)\, dt \\
      & = \int_0^1 \bigl(\Phi^t_{Y_\epsilon}\bigr)^* \left(
        \iota_{Y_\epsilon} d \alpha_N +
        \epsilon\,\iota_{Y_\epsilon} d(f\,dx + g\,dy)\right)\, dt\\
      &= \epsilon\, \int_0^1 \bigl(\Phi^t_{Y_\epsilon}\bigr)^*\left( -
        d \alpha_N(uX_f + vX_g,-) + \iota_{Y_\epsilon}(df\wedge dx +
        dg\wedge dy)\right)\, dt \\
      &= \epsilon\, \int_0^1 \bigl(\Phi^t_{Y_\epsilon}\bigr)^*\left(
        u\,df + v\,dg - \lie{\Reeb} (uf + vg) \, \alpha_N +
        \iota_{Y_\epsilon}(df\wedge dx + dg\wedge dy)\right)\, dt \\
      &= \epsilon\, \int_0^1 \bigl(\Phi^t_{Y_\epsilon}\bigr)^*\left(
        u\,df + v\,dg - \lie{\Reeb} (uf + vg) \, \alpha_N - u\,df -
        v\,dg -\epsilon\,\bigl((uX_f + vX_g) f\bigr)\, dx
        -\epsilon\,\bigl((uX_f + vX_g) g\bigr)\, dy \right)\, dt \\
      &= \epsilon\, \int_0^1 \bigl(\Phi^t_{Y_\epsilon}\bigr)^*\left( -
        \lie{\Reeb} (uf + vg) \, \alpha_N -\epsilon\,\bigl((uX_f +
        vX_g) f\bigr)\, dx -\epsilon\,\bigl((uX_f + vX_g) g\bigr)\, dy \right)\, dt \\
      &= \epsilon\, \int_0^1 \bigl(\Phi^t_{Y_\epsilon}\bigr)^*\Bigl( -
      \lie{\Reeb} (uf + vg) \, \alpha_N -\epsilon\,\bigl(u
      f\lie{\Reeb}f +
      vf\lie{\Reeb}g - v\,\alpha_N([X_f,X_g])\bigr)\, dx \\
      & -\epsilon\,\bigl(vg\lie{\Reeb} g + u g\lie{\Reeb} f + u\,\alpha_N ([X_f,X_g])\bigr)\, dy \Bigr)\, dt \\
    \end{split}
  \end{equation*}

\end{example}
}

\section{Preliminaries}

\subsection{Fillability}\label{sec: definition fillability}

In this section, we will briefly present some standard definitions and
properties regarding fillability and $J$--holomorphic curves.

\begin{defi}
  A \textbf{Liouville field} $X_L$ is a vector field on a symplectic
  manifold $(W,\omega)$ for which
  \begin{equation*}
    \lie{X_L} \omega = \omega
  \end{equation*}
  holds.
\end{defi}

If $(W,\omega)$ is a symplectic manifold with boundary $M := \partial
W$, and if $X_L$ is a Liouville field on $W$ that is transverse to
$M$, then the kernel of the $1$--form
\begin{equation*}
  \alpha := \left.\omega(X_L,-)\right|_{TM}
\end{equation*}
defines a contact structure on~$M$.

\begin{defi}
  Let $(M,\xi)$ be a closed contact manifold.  A compact symplectic
  manifold $(W,\omega)$ with boundary $\partial W =M$ is called a
  \textbf{strong (symplectic) filling} of $(M,\xi)$, if there exists a
  Liouville field $X_L$ in a neighborhood of the boundary $M$ pointing
  \emph{outwards} through $M$ such that $X_L$ defines a contact form
  for $\xi$.  If the vector field $X_L$ is defined globally on $W$, we
  speak of an \textbf{exact symplectic filling}.
\end{defi}

\begin{remark}
  In a symplectic filling, we can always find a neighborhood of $M$
  that is of the form $(-\epsilon,0]\times M$ by using the negative
  flow of $X_L$ to define
  \begin{equation*}
    (-\epsilon,0]\times M \to W,\, (p,t) \mapsto \Phi_t(p)\;.
  \end{equation*}
  Denote the hypersurfaces $\{t\}\times M$ by $M_t$, and the $1$--form
  $\omega(X_L,-)$ by $\widehat\alpha$.  It is clear that
  $\widehat\alpha$ defines on every hypersurface $M_t$ a contact
  structure.  The Reeb field $\Reeb$ is the unique vector field on
  $(-\epsilon, 0]\times M$ that is tangent to the hypersurfaces $M_t$,
  and satisfies both $\omega(\Reeb,Y) = 0$ for every $Y\in TM_t$, and
  $\omega(X_L,\Reeb) = 1$.  This field restricts on any hypersurface
  $M_t$ to the usual Reeb field for the contact form
  $\left.\widehat\alpha\right|_{TM_t}$.

  Below we will show that the ``height'' function $h:\,
  (-\epsilon,0]\times M\to \R, (t,p) \mapsto t$ is plurisubharmonic
  with respect to certain almost complex structures.
\end{remark}

In the context of this article we will use the term ``adapted almost
complex structure'' in the following sense.

\begin{defi}
  Let $(W,\omega)$ be a symplectic filling of a contact manifold
  $(M,\alpha)$.  An almost complex structure $J$ is a smooth section
  of the endomorphism bundle $\End(TW)$ such that $J^2 = -\1$.  We say
  that $J$ is \textbf{adapted to the filling}, if it is compatible
  with $\omega$ in the usual sense, which means that for all $X,Y\in
  T_pW$
 \begin{align*}
   \omega(JX,JY) &= \omega(X,Y)
   \intertext{holds, and}
   g(X,Y) &:= \omega(X,JY)
 \end{align*}
 defines a Riemannian metric.  Additionally, we require $J$ to satisfy
 close to the boundary $M = \partial W$ the following properties: For
 the two vector fields $X_L$ and $\Reeb$ introduced above, $J$ is
 defined as
 \begin{align*}
   JX_L = \Reeb \text{ and } J\Reeb = - X_L\;,
 \end{align*}
 and $J$ leaves the subbundle $\xi_t = TM_t \cap \ker \widehat\alpha$
 invariant.
\end{defi}

\begin{proposition}
  Let $V$ be an open subset of $\C$, and let $u:\,V \to W$ be a
  $J$--holomorphic map.  The function $h\circ u:\,V \to \R$ is
  subharmonic.
\end{proposition}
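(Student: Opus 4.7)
The plan is to recast subharmonicity of $h\circ u$ as positivity of the $2$-form $dd^c(h\circ u)$ on (the preimage in $V$ of the collar) and to obtain this positivity by pulling back an identity established on $W$. Set $d^c h := -dh\circ J$, so that in the standard coordinate $z=s+it$ on $V\subset \C$ we have $dd^c_j(h\circ u) = \Delta(h\circ u)\,ds\wedge dt$, where $d^c_j$ denotes the analogous operator for the standard complex structure $j$ on $\C$.

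The key computation is to identify $d^c h$ with $\widehat\alpha$ on the collar. Using the trivialization $(-\epsilon,0]\times M\to W$ induced by the flow of $X_L$, we have $X_L=\partial_t$, so $dh=dt$ vanishes on $TM_t$ and satisfies $dh(X_L)=1$. Combining this with the adapted-$J$ relations $JX_L=\Reeb$, $J\Reeb=-X_L$ and $J\xi_t=\xi_t$, I evaluate $d^c h$ on the splitting $TW=\langle X_L\rangle\oplus\langle \Reeb\rangle\oplus \xi_t$ and find
\[
d^c h(X_L)=0,\qquad d^c h(\Reeb)=1,\qquad d^c h|_{\xi_t}=0,
\]
which matches $\widehat\alpha=\iota_{X_L}\omega$ term by term. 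Hence $d^c h=\widehat\alpha$, and Cartan's formula together with the Liouville identity $\lie{X_L}\omega=\omega$ gives
\[
dd^c h \;=\; d\iota_{X_L}\omega \;=\; \lie{X_L}\omega \;=\; \omega.
\]

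Next, $J$-holomorphicity $du\circ j=J\circ du$ is equivalent to the pointwise relation $u^*(d^c h)=d^c_j(h\circ u)$; applying $d$ and commuting it with pullback yields
\[
dd^c_j(h\circ u) \;=\; u^*\omega.
\]
Finally, $\omega$-compatibility of $J$ with the metric $g(X,Y)=\omega(X,JY)$ and the $J$-holomorphicity of $u$ give
\[
u^*\omega(\partial_s,\partial_t)\;=\;\omega\bigl(du(\partial_s),J\,du(\partial_s)\bigr)\;=\;g\bigl(du(\partial_s),du(\partial_s)\bigr)\;\ge\;0,
\]
so $\Delta(h\circ u)\ge 0$, which is the desired subharmonicity.

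The only step requiring genuine care is the identification $d^c h=\widehat\alpha$, since it relies crucially on the boundary-adapted form of $J$ and on $X_L$ being the generator of the collar; once this is secured the remainder is formal manipulation of $u^*\omega$ into the Laplacian on $V$.
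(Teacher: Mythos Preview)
Your proof is correct and follows essentially the same route as the paper: identify $\widehat\alpha$ with $-dh\circ J$ on the collar, use $d\widehat\alpha=\omega$, pull back along the $J$--holomorphic map so that $u^*\omega = -dd^c(h\circ u)$ (in the paper's sign convention, or $+dd^c$ in yours), and then invoke $\omega$--compatibility to conclude $\Delta(h\circ u)\ge 0$. The only differences are cosmetic: you use the convention $d^c h=-dh\circ J$ while the paper uses $d^c h=dh\circ J$, and you spell out the verification of $d^c h=\widehat\alpha$ on the frame $\langle X_L,\Reeb\rangle\oplus\xi_t$ where the paper just says ``a short computation shows''. One small quibble: saying $J$--holomorphicity is \emph{equivalent} to $u^*(d^c h)=d^c_j(h\circ u)$ overstates it (only the forward implication is needed and holds), but this does not affect the argument.
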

\begin{proof}
  A short computation shows that $\widehat\alpha = -dh \circ J$, and
  then we get
  \begin{equation*}
    \begin{split}
      0 & \le u^*\omega = u^*d\iota_{X_L}\omega = u^*d\widehat \alpha =
      u^* d \bigl(- dh\circ J\bigr) = -u^* dd^c h \\
      & = -dd^c(h\circ u) = \Bigl(\frac{\partial^2 h\circ u}{\partial
        x^2}+ \frac{\partial^2 h\circ u}{\partial y^2}
      \Bigr)\,dx\wedge dy\;.  \qedhere
    \end{split}
  \end{equation*}
\end{proof}

\begin{corollary}\label{kurven transvers zu rand}
  By the strong maximum principle and the boundary point lemma (e.g.\
  \cite{GilbargTrudinger}), any $J$--holomorphic curve
  $u:\,(\Sigma,\partial\Sigma) \to (W,\partial W)$ is either constant
  or it touches $M = \partial W$ only at its boundary
  $\partial\Sigma$, and this intersection is transverse.  Furthermore,
  if $u$ is non constant, then the boundary curve
  $\left.u\right|_{\partial\Sigma}$ has to intersect the contact
  structure $\xi$ on $\partial W$ in positive Reeb direction.
\end{corollary}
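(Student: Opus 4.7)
My plan is to exploit the preceding subharmonicity statement by combining it with the strong maximum principle and the Hopf boundary point lemma (as packaged in \cite{GilbargTrudinger}), applied to the composition $h\circ u$. Since $h$ is only defined on a one-sided collar neighborhood of $M$, I first restrict to the open set $\Omega := u^{-1}\bigl((-\epsilon,0]\times M\bigr)\subset \Sigma$; on $\Omega$ the preceding proposition guarantees that $h\circ u$ is subharmonic. If the value $0$ is never attained on $\Sigma$, then $u$ avoids $\partial W$ and the conclusion holds trivially, so I assume that the maximum of $h\circ u$ equals $0$.

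First I treat the case that this maximum is attained at an interior point $z_0\in\interior\Sigma$. The strong maximum principle on the connected component $U$ of $z_0$ in $\Omega$ forces $h\circ u\equiv 0$ on $U$, so $u(U)\subset M$. At each point of $U$ the image of $du$ is $J$--invariant and contained in $TM$; since a quick computation using $JX_L=\Reeb$ shows that $TM\cap J(TM)=\xi$, the differential $du$ takes values in $\xi$ on $U$. Hence $u^*\widehat\alpha\equiv 0$ on $U$, so $u^*\omega=d(u^*\widehat\alpha)=0$, and $\omega$--compatibility of $J$ forces $du\equiv 0$ on $U$. Unique continuation for $J$--holomorphic maps then implies that $u$ is constant on all of $\Sigma$.

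Now suppose $u$ is non-constant, so the maximum $0$ is attained only at a boundary point $z_0\in\partial\Sigma$. The Hopf boundary point lemma yields $\partial(h\circ u)/\partial\nu(z_0)>0$, where $\nu$ is the outward normal to $\partial\Sigma$. In collar coordinates $X_L=\partial_t$ and $h=t$, so this already says that $du(\nu)$ has strictly positive $X_L$--component, i.e., $u$ is transverse to $M$ at $z_0$. To pin down the Reeb direction, I decompose
\begin{equation*}
    du(\nu) = aX_L + b\Reeb + \zeta,\qquad \zeta\in\xi,\ a>0,
\end{equation*}
and let $\tau$ be the positively oriented tangent to $\partial\Sigma$, so $\tau=J\nu$ in the standard complex structure on $\Sigma$. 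Using $J$--holomorphicity together with $JX_L=\Reeb$, $J\Reeb=-X_L$, and $J\xi=\xi$, this gives
\begin{equation*}
    du(\tau) = J\,du(\nu) = a\Reeb - bX_L + J\zeta .
\end{equation*}
Since $u|_{\partial\Sigma}$ maps into $M$, the vector $du(\tau)$ must be tangent to $M$, which forces $b=0$. Consequently $\widehat\alpha\bigl(du(\tau)\bigr)=a>0$, exactly the assertion that the boundary curve crosses $\xi$ in positive Reeb direction.

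The one step where I expect a little friction is the implication \emph{$du\equiv 0$ on a nonempty open set} $\Rightarrow$ \emph{$u$ constant on $\Sigma$}, which relies on unique continuation for pseudo-holomorphic maps and should be cited rather than proved in passing. Everything else is bookkeeping with the Liouville--Reeb splitting $TW|_M = \langle X_L\rangle\oplus\langle\Reeb\rangle\oplus\xi$ and with orientation conventions on $\partial\Sigma$.
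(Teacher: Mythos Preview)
Your argument is correct and is precisely the intended one: the paper gives no proof beyond the reference to the strong maximum principle and the boundary point lemma already embedded in the statement, and your write-up simply unpacks those citations. The only step you flagged---unique continuation for $J$--holomorphic maps to pass from $du\equiv 0$ on an open set to $u$ constant---is indeed a standard citation (e.g.\ Aronszajn or the similarity principle) rather than something to prove here.
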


In the rest of the article, we will denote the half space
$\bigl\{z\in\C\bigm|\,\ImaginaryPart z\ge 0\bigr\}$ by
$\HyperbolicPlane$.  Let $\phi:\, N\looparrowright M$ be an immersion
of a manifold $N$ in $M$.  We define the self-intersection set of $N$
as
\begin{equation*}
  \selfintersection{N} := \bigl\{p\in N\bigm|\,
  \text{$\exists p'\ne p$ with $\phi(p) = \phi(p')$}\bigr\} \;.
\end{equation*}

\subsection{Tubular neighborhood theorem for contact submanifolds}

Let $N$ be a contact submanifold of $(M, \alpha)$.  The contact
structure $\xi = \ker \alpha$ can be split along $N$ into the two
subbundles
\begin{equation*}
  \left.\xi\right|_N = \xi_N \oplus \xi_N^\perp\;,
\end{equation*}
where $\xi_N$ is the contact structure on $N$ given by $\xi_N = TN
\cap \left.\xi\right|_N = \ker \left.\alpha\right|_{TN}$, and
$\xi_N^\perp$ is the symplectic orthogonal of $\xi_N$ inside
$\left.\xi\right|_N$ with respect to the form $d\alpha$.  Note that
$\xi_N^\perp$ carries a conformal symplectic structure given by
$d\alpha$, but neither $\xi_N^\perp$ nor the conformal symplectic
structure do depend on the contact form chosen on $M$.  The bundle
$\xi_N^\perp$ can be identified with the normal bundle of $N$.

A well known neighborhood theorem states that $\xi_N^\perp$ determines
a small neighborhood of $N$ completely.

\begin{theorem}
  Let $(N,\xi_N)$ be a contact submanifold of both $(M_1, \xi_1)$ and
  $(M_2, \xi_2)$.  Assume that the two normal bundles
  $(\xi_1)_N^\perp$ and $(\xi_2)_N^\perp$ are isomorphic as conformal
  symplectic vector bundles.  Then there exists a small neighborhood
  of $N$ in $M_1$ that is contactomorphic to a small neighborhood of
  $N$ in $M_2$.
\end{theorem}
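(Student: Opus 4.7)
The proof is a Moser-type stability argument. The plan is to first use the given conformal symplectic bundle isomorphism to identify tubular neighborhoods of $N$ in $M_1$ and $M_2$ as smooth manifolds, and then to deform one contact form into the other by a flow that fixes $N$ pointwise.

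\textbf{Setup and normalization.} Pick contact forms $\alpha_1, \alpha_2$ for $\xi_1, \xi_2$. The hypothesized isomorphism $F\colon (\xi_1)_N^\perp \to (\xi_2)_N^\perp$, together with the identity on $TN$, identifies the two normal bundles of $N$. Using the exponential map of any auxiliary Riemannian metric (or any tubular neighborhood construction), extend this to a diffeomorphism $\Psi\colon U_1 \to U_2$ between neighborhoods of $N$ with $\Psi|_N = \id_N$. Set $\beta := \Psi^*\alpha_2$ on $U_1$. Along $N$, both $\alpha_1$ and $\beta$ restrict to the same $1$--form on $TN$ (after rescaling one of them by a positive function on $N$, extended to $U_1$) and both vanish on $(\xi_1)_N^\perp$, so they agree as $1$--forms at every point of $N$. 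The conformal symplectic compatibility of $F$ ensures that, after this rescaling, $d\alpha_1$ and $d\beta$ also agree on $(\xi_1)_N^\perp \wedge (\xi_1)_N^\perp$; a standard further modification of $\Psi$ (a fiberwise symplectic adjustment of the normal-bundle identification) kills the remaining mixed $TN \wedge (\xi_1)_N^\perp$ components, so that $\alpha_1 = \beta$ and $d\alpha_1 = d\beta$ on $TM_1|_N$.

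\textbf{Moser's trick.} Consider the linear interpolation $\alpha_t = (1-t)\alpha_1 + t\beta$ for $t\in[0,1]$. At points of $N$ we have $\alpha_t = \alpha_1$, which is contact, so by continuity $\alpha_t$ is contact on a possibly smaller neighborhood of $N$. Denote its Reeb field by $R_t$ and its kernel by $\xi_t$. Seek a time-dependent vector field $X_t$, written as $X_t = H_t R_t + Y_t$ with $Y_t \in \xi_t$ and $H_t = \alpha_t(X_t)$, whose flow $\phi_t$ satisfies $\phi_t^*\alpha_t = g_t\alpha_1$ for positive functions $g_t$. Differentiating in $t$ produces the pointwise equation
\begin{equation*}
  dH_t + \iota_{Y_t}\,d\alpha_t + (\beta - \alpha_1) = \mu_t \,\alpha_t\;,
\end{equation*}
where $\mu_t$ is determined by evaluation on $R_t$, and $Y_t$ is then solved for using the nondegeneracy of $d\alpha_t|_{\xi_t}$. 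Integrating $X_t$ for time $1$ and composing with $\Psi$ yields the desired contactomorphism of neighborhoods.

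\textbf{Main obstacle.} The only delicate point is ensuring that $X_t$ vanishes on $N$, so that its flow is defined on a uniform neighborhood of $N$ and fixes $N$ pointwise. This is exactly where the first-order matching arranged in the normalization step pays off: since $\beta - \alpha_1$ and $d(\beta - \alpha_1)$ both vanish on $TM_1|_N$, one can solve the Moser equation with $H_t|_N = 0$ and $Y_t|_N = 0$. Without this matching one would obtain only a germ of a contactomorphism along $N$; with it, the flow extends to a genuine neighborhood and the proof is complete.
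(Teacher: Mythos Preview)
The paper does not prove this theorem; it is quoted as a well-known neighborhood result and no argument is given. Your Moser-type proof is the standard one and is essentially correct.

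One remark: you work harder than necessary in the normalization step. Once $\alpha_1$ and $\beta := \Psi^*\alpha_2$ agree as $1$--forms at every point of $N$---which follows from matching on $TN$ after the rescaling, together with the fact that both forms vanish on the normal directions identified with $(\xi_i)_N^\perp \subset \xi_i$---the Moser vector field already vanishes on $N$. Taking $H_t \equiv 0$, the defining equation $\iota_{Y_t}\, d\alpha_t\big|_{\xi_t} = -(\beta-\alpha_1)\big|_{\xi_t}$ has vanishing right-hand side along $N$, so $Y_t|_N = 0$ and hence $X_t|_N = 0$. The additional first-jet condition $d\alpha_1 = d\beta$ along $N$ is not needed, and the ``fiberwise symplectic adjustment'' you invoke to obtain it is left vague (the conformal factors on the tangential and normal symplectic pieces need not coincide, so arranging full equality of the $2$--forms would in fact require more care than you indicate).
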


If $N$ has a trivial conformal symplectic normal bundle $\xi_N^\perp$,
then we call the product space $N\times \R^{2k}$ with contact
structure $\alpha_N + \sum_{j=1}^k (x_j\,dy_j - y_j\,dx_j)$ the
\emph{standard model for neighborhoods} of $N$.

\section{The generalized plastikstufe (GPS)}\label{sec: definition GPS}

\begin{defi}
  Let $(M,\alpha)$ be a $(2n+1)$--dimensional contact manifold, and
  let $S$ be a closed $(n-1)$--dimensional manifold.  A
  \textbf{generalized plastikstufe} (\textbf{GPS}) is an immersion
  \begin{equation*}
    \Phi:\, S\times \Disk \looparrowright M, \,
    \bigl(s,re^{i\phi}\bigr) \to \Phi\bigl(s,re^{i\phi}\bigr) \;,
  \end{equation*}
  such that the pull-back $\Phi^*\alpha$ reduces to the form
  $f(r)\,d\phi$ with $f\ge 0$ that only vanishes for $r=0$, and $r=1$.
  Furthermore there is an $\epsilon>0$ such that self-intersections
  may only happen between points of the form $(s,re^{i\phi})$, and
  $(s',r'e^{i\phi})$ with $r,r'\in (\epsilon, 1-\epsilon)$ that have
  equal $\phi$--coordinate.  Finally there must be an open set joining
  $S\times\{0\}$ with $S\times\partial\Disk$ that does not contain any
  self-intersection points.

  We call $S\times\{0\}$ (or also its image) the \textbf{core} of the
  GPS, and $S\times \partial\Disk$ (or again the image) its
  \textbf{boundary}.  We denote $S\times \bigl(\Disk - \{0\} -
  \partial\Disk\bigr)$ by $\GPS^*$ and call it the \textbf{interior}
  of the GPS.
\end{defi}

\begin{remark}\label{remark: label for leaves}
  The regular leaves of the GPS are given by the sets $\{\phi =
  \mathrm{const}\}$.  We are hence requiring that self-intersections
  only happen between points lying on the same leaf.  A different way
  to state this requirement consists in saying that there is a
  continuous map
  \begin{equation*}
    \theta:\, \Phi\bigl(\GPS^*\bigr)\to \S^1
  \end{equation*}
  such that $\theta\bigl(\Phi(s,r,\phi)\bigr) = \phi$.
\end{remark}

\begin{theorem}\label{theorem: main result}
  A closed contact manifold $(M,\alpha)$ that contains a GPS does not
  have an exact symplectic filling.
\end{theorem}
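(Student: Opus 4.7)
The plan is to adapt the Gromov--Eliashberg filling obstruction via pseudoholomorphic disks, following the scheme Niederkrüger developed for the original plastikstufe. Suppose, for contradiction, that $(W,\omega=d\lambda)$ is an exact symplectic filling of $(M,\alpha)$ with $\lambda|_{TM}=\alpha$. I would choose an almost complex structure $J$ adapted to the filling, and perturb it in a neighborhood of the image of $\Phi$ so that each regular leaf $L_{\phi_0}:=\Phi\bigl(S\times\{re^{i\phi_0}\mid r\in(0,1)\}\bigr)$ becomes totally real for $J$; this is possible because $\Phi^*\alpha=f(r)\,d\phi$ vanishes on every leaf, so each $L_{\phi_0}$ is Legendrian for $\alpha$.

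Next I would introduce the moduli space $\moduli$ of simple $J$--holomorphic maps $u\colon(\Disk,\partial\Disk)\to\bigl(W,\Phi(S\times\Disk)\bigr)$ whose boundary loop has degree $1$ with respect to the angular function $\theta\colon\Phi(\GPS^*)\to\S^1$ from Remark~\ref{remark: label for leaves}. By Corollary~\ref{kurven transvers zu rand} any non-constant such $u$ meets $M$ only along $\partial\Disk$, transversely. Non-emptiness comes from a Bishop family attached at the singular core $S\times\{0\}$, where all Legendrian leaves cross: classical elliptic theory produces above each $s\in S$ a one-parameter family of small holomorphic disks shrinking onto $\Phi(s,0)$, which assembles into an $S$-parameter boundary stratum of $\moduli$.

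The key a priori estimate is the area bound coming from exactness: for any $u\in\moduli$, Stokes' theorem gives
\begin{equation*}
\int_{\Disk}u^*\omega=\int_{\partial\Disk}u^*\lambda=\int_{\partial\Disk}u^*\alpha=\int_{\partial\Disk}f(r\circ u)\,u^*d\phi\le 2\pi\max_{r\in[0,1]}f(r).
\end{equation*}
Combined with Gromov compactness this yields a compactification $\overline{\moduli}$. Sphere bubbles are excluded because $\int_{\S^2}v^*\omega=0$ on an exact symplectic manifold. Disk bubbles with boundary on a single leaf $L_{\phi_0}$ are excluded because then $u^*d\phi=0$ on the bubble boundary, forcing zero area and hence a constant map. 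Breakings at the GPS boundary $S\times\partial\Disk$ are blocked because $f(1)=0$ forces the area of any limiting piece whose boundary sits there to vanish. The self-intersections of $\Phi$, which by definition live in the strip $\epsilon<r<1-\epsilon$ with matched $\phi$, can be avoided by fixing a generic marked point on $\partial\Disk$ and using the label $\theta$ to split the problem leafwise.

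To finish I would analyze the evaluation map $\evaluationmap\colon\overline{\moduli}\to M$ at a fixed boundary point. Near the Bishop stratum, $\evaluationmap$ is a local homeomorphism onto a neighborhood of the core, and the ``corridor'' connecting $S\times\{0\}$ with $S\times\partial\Disk$ guaranteed by the GPS definition lets one continue this family outward. The contradiction will come from the fact that this continuation is compact and yet has no room to close up: the image of $\evaluationmap$ would have to reach the boundary locus $\{r=1\}$, but the vanishing of $f$ there prevents non-constant disks from touching it. The hard part is precisely this last step — making the evaluation/degree argument rigorous in arbitrary dimension, establishing transversality of the Cauchy--Riemann problem with totally real boundary (by a generic perturbation of $J$ in the interior of $W$), and carefully controlling compactness near the GPS boundary. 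Once these analytic issues are settled, the topological incompatibility of the closed Bishop continuation with the geometry of the GPS gives the desired contradiction and shows that $(M,\alpha)$ cannot admit an exact symplectic filling.
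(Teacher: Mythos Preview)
Your overall scheme matches the paper's: assume a filling, choose an adapted $J$ with special form near the core and near $S\times\partial\Disk$, start a Bishop family at the core, push it outward along a path avoiding the self-intersections (your ``corridor'' is the paper's path $\gamma$, giving a $1$--dimensional moduli space $\moduli_\gamma$), bound energy by $2\pi\max f$ via exactness, and derive a contradiction from compactness. Two of your sub-arguments, however, have genuine gaps.

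First, your exclusion of disk bubbles is incomplete. You rule out bubbles whose boundary sits on a single leaf $L_{\phi_0}$, but nothing forces a bubble's boundary to stay on one leaf; a limit could in principle split into several disks each of whose boundary still winds in $\phi$. The correct argument, which the paper uses, goes through the angular map $\theta$: by Corollary~\ref{kurven transvers zu rand} every non-constant limit disk has boundary positively transverse to the foliation, so each piece contributes strictly positive degree to $\theta\circ(\cdot)|_{\partial\Disk}$, and since the original disks have degree $1$ only one piece can survive. The paper then makes this rigorous in Section~\ref{sec: bubbling} by a self-contained reparametrization and removal-of-singularity argument (Theorem~\ref{theorem: removal singularity}) rather than invoking a general bubbling theorem for immersed boundaries.

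Second, the claim that ``breakings at $S\times\partial\Disk$ are blocked because $f(1)=0$ forces the area of any limiting piece whose boundary sits there to vanish'' does not give a barrier. A sequence in $\moduli_\gamma$ could have most of its boundary at moderate $r$ while some arc drifts toward $r=1$; such disks keep bounded nonzero area, and your argument says nothing about them. The paper handles this by an entirely different mechanism (Section~\ref{sec: boundary of GPS}): it chooses $J$ in a neighborhood of $S\times\partial\Disk$ so that this neighborhood is biholomorphic to an open set in $\C\times T^*S\times\C^*$, foliated by $J$--holomorphic codimension--$2$ submanifolds $N_C$. Positivity of intersection with the $N_C$, together with a capping argument inside the GPS, then shows that no disk with contractible boundary in the GPS can enter this neighborhood at all. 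This barrier is what actually prevents the Bishop continuation from escaping toward the GPS boundary, and it is precisely the ``hard part'' you flagged but did not supply.
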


\begin{remark}
  Using a more precise analysis of bubbling (as in
  \cite{BubblingImmersedBoundary}) should make it possible to prove
  that a $\GPS$ is an obstruction to finding even a (semipositive)
  strong symplectic filling.  In Remark~\ref{remark: sketch full
    proof}, we sketch how the proof would have to be modified.  Note
  though that \cite{BubblingImmersedBoundary} requires that the
  self-intersections of the GPS are clean.
\end{remark}

\section{Constructing immersed plastikstufes in neighborhoods of
  submanifolds}\label{sec: construction of GPS in tubular nbhd}

\subsection{Local construction in codimension two}

The most prominent example of an overtwisted contact manifold in the
literature is $\R^3$ with the structure induced by the contact form
\begin{equation*}
  \alpha_- = \cos r\, dz + r\sin r\,d\phi \;,
\end{equation*}
written in cylindrical coordinates $(r,\phi,z)$ such that $x=
r\cos\phi$, $y=r\sin\phi$, and $z = z$.  Any plane $\{z =
\text{const.}\}$ contains an overtwisted disk centered at the origin
with radius $r = \pi$.  From the classification in
\cite{EliashbergContactR3}, it follows that $(\R^3,\alpha_-)$ is up to
contactomorphism the unique contact structure on $\R^3$ that is
overtwisted at infinity, and hence any sufficiently small contractible
neighborhood of an overtwisted disk in a contact $3$--manifold is
contactomorphic to $(\R^3,\alpha_-)$.

The Reeb field $\Reeb$ associated to $\alpha_-$ is given by
\begin{equation*}
  \Reeb = \frac{1}{r+\sin r\,\cos r}\,\bigl(\sin r\,\partial_\phi
  + (\sin r + r\cos r)\,\partial_z\bigr)\;.
\end{equation*}
Its flow $\Phi_t$ is linear, because $r$ remains constant, and the
coefficients of the $z$-- and the $\phi$--coordinate only depend on
the $r$--coordinate.  The Reeb field is tangent to the overtwisted
disk on the circle of radius $r_0$ such that $r_0 = - \tan r_0$
($r_0\approx 2.029$).  Inside this circle $\Reeb$ has a positive
$z$--component, outside it has a negative one.  This means the
overtwisted disk $\overtwisted$ and its translation by the Reeb flow
$\Phi_t(\overtwisted)$ for a time $t\ne 0$ only intersect along the
circle of radius $r_0$ (see Fig.~\ref{fig: ot disk and its image by
  flow}).  More precisely, the Reeb field reduces on the circle of
radius $r_0$ to $\Reeb = 1/r_0\sin r_0 \,\partial_\phi$, so that it
defines a rotation with period $T = 2\pi r_0 \sin r_0 \approx 11.4$.

\begin{figure}[htbp]
 \begin{picture}(0,0)%
\includegraphics{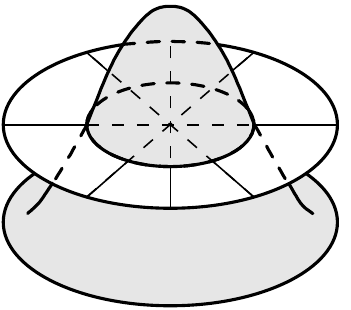}%
\end{picture}%
\setlength{\unitlength}{3947sp}%
\begingroup\makeatletter\ifx\SetFigFontNFSS\undefined%
\gdef\SetFigFontNFSS#1#2#3#4#5{%
  \reset@font\fontsize{#1}{#2pt}%
  \fontfamily{#3}\fontseries{#4}\fontshape{#5}%
  \selectfont}%
\fi\endgroup%
\begin{picture}(1673,1474)(3420,-4287)
\put(4953,-3180){\makebox(0,0)[lb]{\smash{{\SetFigFontNFSS{10}{12.0}{\familydefault}{\mddefault}{\updefault}{\color[rgb]{0,0,0}$\overtwisted$}%
}}}}
\put(5078,-3961){\makebox(0,0)[lb]{\smash{{\SetFigFontNFSS{10}{12.0}{\familydefault}{\mddefault}{\updefault}{\color[rgb]{0,0,0}$\Phi_t\bigl(\overtwisted\bigr)$}%
}}}}
\end{picture}%
  \caption{The overtwisted disk and its image under the Reeb flow only
    intersect along a circle of radius~$r_0$.}\label{fig: ot disk and
    its image by flow}
\end{figure}

Take now the product of $\R^3$ with $\R^2$, and define on $\R^3\times
\R^2$ the contact form $\alpha_- + R^2\,d\theta$, where $(R,\theta)$
are polar coordinates of $\R^2$.  This is a contact fibration, and we
will use the first step of the construction in
\cite{PresasExamplesPlastikstufes}, namely we will trace a closed path
$\gamma:\,\S^1 \to \R^2$ that has the shape of a figure-eight, with
the double point at the origin, and such that both parts of the eight
have equal area with respect to the standard area form $2R\,dR\wedge
d\theta$.  Start at the origin of the disk, at $\gamma(1) = 0$ on this
closed loop, and regard the overtwisted disk $\overtwisted$ in the
fiber $\R^3\times\{0\}$ described above.  By using the parallel
transport of $\overtwisted$ along the path $\gamma$, we are able to
describe an immersed plastikstufe.  The parallel transport reduces in
the fibers to the flow of the vector field $-c\,\Reeb$ with $c =
\norm{\gamma}^2\, d\theta(\gamma')$, so that the monodromy of a closed
loop is just given by the Reeb flow $\Phi_T$ for a time $T$ that is
equal to the area that has been enclosed by the loop, where we have to
count with orientation.  The total area of the figure-eight $\gamma$
vanishes, because on one part of the eight, we are turning in positive
direction, on the other in the opposite one, and the area of both
parts was chosen to be equal.

\begin{figure}[htbp]
\begin{picture}(0,0)%
\includegraphics{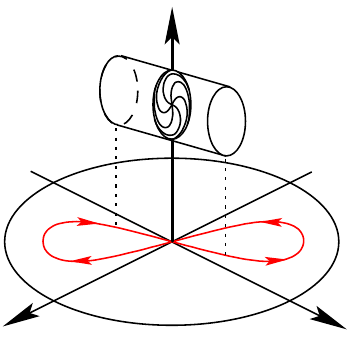}%
\end{picture}%
\setlength{\unitlength}{3947sp}%
\begingroup\makeatletter\ifx\SetFigFontNFSS\undefined%
\gdef\SetFigFontNFSS#1#2#3#4#5{%
  \reset@font\fontsize{#1}{#2pt}%
  \fontfamily{#3}\fontseries{#4}\fontshape{#5}%
  \selectfont}%
\fi\endgroup%
\begin{picture}(1677,1732)(3413,-3976)
\put(5045,-3759){\makebox(0,0)[lb]{\smash{{\SetFigFontNFSS{9}{10.8}{\familydefault}{\mddefault}{\updefault}{\color[rgb]{0,0,0}$x$}%
}}}}
\put(3453,-3916){\makebox(0,0)[lb]{\smash{{\SetFigFontNFSS{9}{10.8}{\familydefault}{\mddefault}{\updefault}{\color[rgb]{0,0,0}$y$}%
}}}}
\put(4285,-2367){\makebox(0,0)[lb]{\smash{{\SetFigFontNFSS{10}{12.0}{\familydefault}{\mddefault}{\updefault}{\color[rgb]{0,0,0}$\R^3$}%
}}}}
\end{picture}%
  \caption{Parallel transport of the overtwisted disk along a
    figure-eight path yields an immersed plastikstufe.}\label{fig:
    transport along figure eight}
\end{figure}

We will describe the construction more explicitely to better
understand the self-intersection set.  The parallel transport of the
overtwisted disk defines an immersion
\begin{equation*}
  \overtwisted \times \S^1 \to \R^3\times \R^2,\,
  \bigl((x,y,0),e^{i\theta}\bigr) \mapsto \bigl(\Phi_{T(\theta)}
  (x,y,0), \gamma(e^{i\theta})\bigr)\;,
\end{equation*}
where $T(\theta) = \int_\gamma r^2\,d\phi$.  The map is well defined,
because $T(\theta + 2\pi) = T(\theta)$.  It is also easy to see that
this map is an immersion.

The only self-intersection points may lie over the crossing $\gamma(1)
= \gamma(-1)$ in the figure-eight, and in fact, since the Reeb flow
moves the interior of the overtwisted disk up, and the outer part
down, self-intersections only happen between the two circles
\begin{equation*}
  \bigl\{(r_0\,\cos \phi, r_0\,\sin\phi, 0)\bigr\} \times \{-1, 1\}
  \subset \overtwisted \times \S^1 \;.
\end{equation*}
Denote the area enclosed by one of the petals of the figure-eight path
by $A$.  The images of any pair of points $\bigl((r_0\,\cos \phi,
r_0\,\sin\phi, 0), 1\bigr)$ and $\bigl((r_0\,\cos (\phi-t_0),
r_0\,\sin(\phi-t_0), 0), -1\bigr)$, with $t_0 = A/r_0\sin r_0$ are
identical.

Note that if $\gamma$ is chosen such that $A = 2\pi r_0\sin r_0$, then
the pair of points that intersect each other always lie on the same
ray of the overtwisted disk, and we have in fact constructed a GPS.
The figure-eight path has to enclose a sufficiently large area, and we
cannot realize such a path $\gamma$ in a disk $\Disk[R]$ of radius $R
< 2\sqrt{r_0\sin r_0} \approx 3.82$.

\subsection{Higher codimension}

Use now the same contact structure on $(\R^3,\alpha_-)$ as above, and
extend it to a contact structure on $\R^3\times \C^k$ with contact
form
\begin{equation*}
  \alpha_- + \sum_{j=1}^k r_j^2\,d\phi_j\;,
\end{equation*}
where $(r_j,\phi_j)$ are polar coordinates for the $j$--th
$\C$--factor in $\C^k$.

Now we will take the $k$--fold product of figure-eight loops of
different sizes, and group them into a map
\begin{equation*}
  \Gamma:\,\T^k \looparrowright \C^k,\, (e^{i\theta_1},\dotsc,e^{i\theta_k})
  \mapsto \bigl(\gamma(e^{i\theta_1}), 2^{1/2}\,\gamma(e^{i\theta_2}),
  \dotsc, 2^{(k-1)/2}\,\gamma(e^{i\theta_k})\bigr)\;.
\end{equation*}
This map is an immersion with self-intersection set 
\begin{equation*}
  \selfintersection{\Gamma} = \bigl\{(e^{i\theta_1},\dotsc,e^{i\theta_k})
  \in \T^k\bigm|\,\text{at least one of the  $\theta_j$ lies in
    $\pi\Z$}\bigr\}\;.
\end{equation*}
Define functions $T_j(\theta):= 2^{j-1}\,\int_0^\theta
\gamma^*\bigl(r^2\,d\phi\bigr)$, and
$T(e^{i\theta_1},\dotsc,e^{i\theta_k}) = \sum_{j=1}^k T_j(\theta_j)$.
Then the immersion
\begin{equation*}
  \overtwisted \times \T^k \to \R^3\times \C^k,\,
  \bigl((x,y,0);e^{i\theta_1},\dotsc,e^{i\theta_k}\bigr)
  \mapsto \bigl(\Phi_{T(\theta_1,\dotsc,\theta_k)}
  (x,y,0); \Gamma(e^{i\theta_1},\dotsc,e^{i\theta_k})\bigr)\;,
\end{equation*}
where $\Phi_t$ denotes the Reeb flow, is a GPS.  Obviously the
self-intersection points of this map are contained in the preimage of
the self-intersection set $\selfintersection{\Gamma}$ downstairs.
Consider two points $(e^{i\theta_1},\dotsc,e^{i\theta_k})$ and
$(e^{i\psi_1},\dotsc,e^{i\psi_k})$ that have the same image under
$\Gamma$.  It follows for each pair $(\theta_j,\psi_j)$ that either
$\theta_j = \psi_j$ or that $\theta_j, \psi_j\in \pi\Z$.  The disks
lying over such points are given by $\Phi_{T(\mathbf{\theta})}
(\overtwisted)$ and $\Phi_{T(\mathbf{\psi})}(\overtwisted)$
respectively, where $\overtwisted = \bigl\{(x,y,0)\bigm|\,
\norm{(x,y,0)} \le \pi\bigr\}$.  The Reeb flow is $\phi$--invariant
and preserves the distance of the points $(x,y,0)$ from the $z$--axis.
Hence in the interior and the exterior of the circle of radius $r_0$,
$\Phi_{T(\mathbf{\theta})}(x,y,0)$ can only be equal to
$\Phi_{T(\mathbf{\psi})}(x',y',0)$, if $T(\mathbf{\theta}) =
T(\mathbf{\psi})$, because the flow changes the $z$--coordinate, and
by the coefficients chosen in $\Gamma$ for the paths, $T$ is injective
on $(\pi a_1,\dotsc, \pi a_k)$ with all $a_j\in \{0,1\}$.
Additionally then we have $(x,y,0) = (x',y',0)$, so that no
self-intersections can happen for points with $\norm{(x,y,0)} \ne
r_0$.  Self-intersections of the GPS can hence only exist for points
where the distance of $(x,y)$ from the origin is equal to $r_0$, but
by the size condition on the figure-eight loops the holonomy will
always correspond to a rotation by a multiple of $2\pi$ so that all
conditions of a GPS are satisfied by this map.

\subsection{Application to contact submanifolds}

Let $(N,\alpha_N)$ be an overtwisted contact $3$--manifold.  We will
show that the product manifold $\bigl(N\times\C^k, \alpha_N +
\sum_{j=1}^k r_j^2\,d\theta_j\bigr)$, where $(r_j,\theta_j)$ are polar
coordinates on the $j$--th factor of $\C^k$ contains a GPS.

Consider a small contractible neighborhood of an overtwisted disk
$\overtwisted$ in $N$.  This neighborhood is contactomorphic to
$(\R^3,\alpha_-)$, because it is overtwisted at infinity.  Choose a
large ball $B$ in $\R^3$ (so large that the Reeb flow for $\alpha_-$
restricted to the overtwisted disk exists for long enough times), then
there is a function $f:\, N\to\R$ such that the chosen ball $B$ can be
embedded by a strict contactomorphism (that means preserving the
contact form) into $(N,f\,\alpha_N)$.  The contact form $f\,\alpha_N +
\sum_{j=1}^k f\,r_j^2\,d\theta_j$ on the product manifold
$N\times\C^k$ can be transformed by the map $(p;z_1,\dotsc,z_k)
\mapsto (p;z_1/\sqrt{f},\dotsc,z_k/\sqrt{f})$ into
\begin{equation*}
  \bigl(N\times\C^k, f\,\alpha_N +
  \sum_{j=1}^k r_j^2\,d\theta_j\bigr)\;.
\end{equation*}
This contains a subset of the form $\bigl(B\times \C^k, \alpha_- +
\sum_{j=1}^k r_j^2\,d\theta_j\bigr)$ in which we can perform the
contruction explained above.

\begin{corollary}\label{GPS in manifolds with overtwisted blah}
  Let $(M,\alpha)$ be a closed contact $(2n+1)$--manifold that
  contains an overtwisted contact submanifold $N$ of dimension $3$
  that has trivial contact normal bundle.  There is a neighborhood of
  $N$ that is contactomorphic to a neighborhood $U$ of $N\times\{0\}$
  in the product space $\bigl(N\times\C^k, \alpha_N + \sum_{j=1}^k
  r_j^2\,d\theta_j\bigr)$.

  If the neighborhood $U$ contains a sufficiently large disk bundle of
  $N\times\{0\}$, then it follows that $M$ does not admit an exact
  symplectic filling.
\end{corollary}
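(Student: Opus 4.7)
The two assertions of the corollary can be handled essentially independently, and the preceding constructions carry almost all of the weight.

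For the first statement, I would apply the contact tubular neighborhood theorem recalled in Section~\ref{sec: definition GPS} (the unnumbered theorem about $\xi_N^\perp$). Since $N$ has trivial conformal symplectic normal bundle, both $(M,\alpha)$ near $N$ and the standard model $\bigl(N\times\C^k,\alpha_N+\sum_{j=1}^k r_j^2\,d\theta_j\bigr)$ near $N\times\{0\}$ restrict to the same data along $N$; hence there exists a contactomorphism between some neighborhood of $N$ in $M$ and some open neighborhood $U$ of $N\times\{0\}$ in the model.

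For the second statement, the plan is to transplant the explicit GPS built in the previous subsection into $U$, and then invoke Theorem~\ref{theorem: main result}. Concretely: pick a small contractible neighborhood in $N$ of an overtwisted disk, identify it with a large enough ball $B\subset(\R^3,\alpha_-)$ via the rescaling trick $(p;z_1,\dotsc,z_k)\mapsto(p;z_1/\sqrt{f},\dotsc,z_k/\sqrt{f})$ (as in the previous subsection), and run the figure--eight $k$--fold product construction of the map $\Gamma:\T^k\looparrowright\C^k$. The image of the resulting immersion $\overtwisted\times\T^k\hookrightarrow B\times\C^k$ lives in a polydisk whose radii are determined by the sizes $2^{(j-1)/2}$ needed for each figure--eight to enclose area $2\pi r_0\sin r_0$. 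The quantitative meaning of ``sufficiently large'' in the corollary is exactly that $U$ should contain this polydisk bundle over $B$; once it does, the construction yields an honest GPS inside $U$, hence inside $M$.

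Finally, since $(M,\alpha)$ is closed and contains a GPS, Theorem~\ref{theorem: main result} rules out any exact symplectic filling. The only mildly subtle point is verifying that the rescaling by $f$ does not spoil the quantitative estimate on the required size of $U$: the rescaling replaces the polydisk of radii $R_j$ by one of radii $R_j/\sqrt{f}$, so the condition on $U$ must be stated in terms of the contact form $\alpha_N$ actually used on $N$ (which explains the authors' earlier remark that ``large'' is not an intrinsic numerical invariant). Apart from this bookkeeping, there is no genuine obstacle; the real content is packaged in the GPS construction of the previous subsection and in Theorem~\ref{theorem: main result}.
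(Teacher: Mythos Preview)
Your proposal is correct and follows the same route as the paper: the GPS built in the preceding subsection is compact, hence sits inside some disk bundle around $N\times\{0\}$, and once $U$ contains that bundle the GPS lives in $M$ and Theorem~\ref{theorem: main result} finishes. The paper's own proof says exactly this in three lines and omits the quantitative recapitulation you give; one small slip is that the tubular neighborhood theorem is recalled in the Preliminaries section, not in Section~\ref{sec: definition GPS}.
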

\begin{proof}
  By the construction just described $\bigl(N\times\C^k, \alpha_N +
  \sum_{j=1}^k r_j^2\,d\theta_j\bigr)$ contains a GPS.  Since the GPS
  is compact, it is contained in some disk bundle around
  $N\times\{0\}$.  If the neighborhood of $N$ is contactomorphic to
  this disk bundle, then $(M,\alpha)$ contains a GPS, and hence cannot
  have an exact symplectic filling.
\end{proof}

\section{Proof of Theorem~\ref{theorem: main result}}
\label{sec: proof of nonfillability}

\subsection{Sketch of the proof}

The proof is based on \cite{NiederkruegerPlastikstufe} (which in turn
is ultimately based on \cite{Eliashberg_NonFillable,Gromov_Kurven}),
and it is very helpful to have a good understanding of this first
article.  Assume that $(M,\alpha)$ has an exact symplectic filling
$(W,\omega)$.  We choose an adapted almost complex structure $J$ on
$W$ that has in a neighborhood of the core $S\times\{0\}$ the special
form described in \cite[Section~3]{NiederkruegerPlastikstufe}, and in
a neighborhood of the boundary $S\times\partial\Disk$ the particular
form described in Section~\ref{sec: boundary of GPS} below.

The chosen complex structure allows us to write down explicitely the
members of a Bishop family around the core of the GPS, so that we find
a non-empty moduli space $\moduli$ of holomorphic disks $u:\, (\Disk,
\partial\Disk) \to (M, \GPS^*)$ with a marked point $z_0\in
\partial\Disk$.  The boundary of each holomorphic disk $u$ intersects every
regular leaf of the GPS exactly once, or expressed differently the
following composition defines a diffeomorphism $\left.\theta\circ
  u\right|_{\partial\Disk}:\, \S^1 \to \S^1$ on the circle.  The
Bishop family is canonically diffeomorphic to a neighborhood of the
core $S\times \{0\}$ via the evaluation map
\begin{equation*}
  \evaluationmap_{z_0}: \, \moduli \to \GPS^*,\, u\mapsto u(z_0)\;.
\end{equation*}

We can now apply similar intersection arguments for the boundary
$S\times\partial\Disk$ of the GPS (Section~\ref{sec: boundary of
  GPS}), and for the core
(\cite[Section~3]{NiederkruegerPlastikstufe}), showing that there
exists a neighborhood of $\partial\GPS$ that cannot be penetrated by
any holomorphic disk, and that the only disks that come close to the
core are the ones lying in the Bishop family.

Choose now a smooth generic path $\gamma \subset S\times\Disk$ that
avoids the self-intersection points of the GPS, and that connects the
core $S\times \{0\}$ with the boundary of the GPS.  In
Section~\ref{sec: moduli space}, we define the moduli space
$\moduli_\gamma := \evaluationmap_{z_0}^{-1}(\gamma)$, and show that
it is a smooth $1$--dimensional manifold.  From now on, we will
further restrict $\moduli_\gamma$ to the connected component of the
moduli space that contains the Bishop family.  Then in fact
$\moduli_\gamma$ has to be diffeomorphic to an open interval.  The
compactification of one of the ends of the interval simply consists in
decreasing the size of the disks in the Bishop family until they
collapse to a single point at $\gamma(0)$ on the core of the GPS.

Our aim will be to understand the possible limits at the other end of
the interval $\moduli_\gamma$, and to deduce a contradiction to the
fillability of $M$.  The energy of all disks $u\in\moduli_\gamma$ is
bounded by $2\pi\,\max f$, where $\alpha = f(r)\,d\phi$ on the GPS.
By requiring that the GPS has only clean intersections, we could apply
the compactness theorem in \cite{BubblingImmersedBoundary} to deduce
even a contradiction for the existence of a semipositive filling (see
Remark~\ref{remark: sketch full proof}).  Instead of merely referring
to that result, we have decided to give a full proof of compactness in
our situation (see Section~\ref{sec: bubbling}).  This way we can drop
the stringent conditions on the self-intersections of the GPS, and the
required proof is in fact significantly simpler than the full proof of
the compactness theorem.

It then follows that for any sequence of disks $u_k\in\moduli_\gamma$,
we find a family of reparametrizations $\phi_k:\, \Disk \to \Disk$
such that $u_k\circ \phi_k$ contains a subsequence converging
uniformly with all derivatives to a disk $u_\infty \in\moduli_\gamma$.
This means that $\moduli_\gamma$ is compact, but since at the same
time we know that the far-most right element $u_\infty$ in
$\moduli_\gamma$ has a small neighborhood in $\moduli_\gamma$
homeomorphic to an open interval, it follows that $u_\infty$ is not a
boundary point of $\moduli_\gamma$.  Compactness contradicts thus the
existence of the filling.

\begin{remark}\label{remark: sketch full proof}
  We will briefly sketch how \cite{BubblingImmersedBoundary} could be
  used to prove the non-existence of even a semipositive filling, if
  the GPS is cleanly immersed.

  The limit of a sequence of holomorphic disks can be described as the
  union of finitely many holomorphic spheres $u_S^1, \dotsc, u_S^K$
  and finitely many holomorphic disks $v_1,\dotsc, v_N$.  The
  holomorphic disks $v_j:\, (\Disk,\partial\Disk) \to (W,\GPS)$ are
  everywhere smooth with the possible exception of boundary points
  that lie on self-intersections of the GPS.  Here $v_j$ will still be
  continuous though (As an example of such disks, take a figure-eight
  path in the complex plane $\C$.  By the Riemann mapping theorem,
  there is a holomorphic disk enclosed into each of the loops, but
  obviously these disks cannot be smooth on their boundary at the
  self-intersection point of the eight).

  We will now first prove that the limit curve of a sequence in
  $\moduli_\gamma$ is only composed of a single disk, which then
  necessarily has to be smooth.  Assume we would have a decomposition
  into several disks $v_1,\dotsc, v_N$.  The boundary of each of these
  disks $v_j$ is a continuous path in $\GPS^*$, we can hence combine
  the disks with the projection $\theta$ defined in
  Remark~\ref{remark: label for leaves} to obtain a continuous map
  $\left.\theta\circ v_j\right|_{\partial\Disk}:\, \S^1 \to \S^1$.
  Thus we can associate to each of the disks $v_j$ a degree.  In fact
  it follows that $\deg \left.\theta\circ v_j\right|_{\partial\Disk} >
  0$, because almost all points on the boundary of $v_j$ are smooth,
  and for them $v_j$ has to intersect, by Corollary~\ref{kurven
    transvers zu rand}, all leaves of the foliation of the GPS in
  positive direction.  Finally assume that there are still several
  disks, each one necessarily with $\deg \left.\theta\circ
    v_j\right|_{\partial\Disk} \ge 1$.  This means that the
  composition of the maps $\left.\theta\circ v_j\right|_{\partial
    \Disk}$ will cover the circle several times, but this is not
  possible for the limit of injective maps $\left.\theta\circ
    u_k\right|_{\partial\Disk}$.  There is hence only a single disk in
  the limit.  Using Theorem~\ref{theorem: convergence for bounded
    maps} below it finally also follows that this disk is smooth, and
  has a boundary that lifts to a smooth loop in $S\times\Disk$.
  
  The reason why there are no holomorphic spheres as bubbles is a
  genericity argument, since the disk and all spheres are regular
  smooth holomorphic objects, we can compute the dimension of the
  bubble tree in which our limit object would lie.  By the assumption
  of semi-positivity, it follows that the dimension would be negative.
\end{remark}

\subsection{The moduli space} \label{sec: moduli space}

The aim of this section is to define the moduli space of holomorphic
disks and to prove that it is a smooth manifold.  Care has to be
taken, because the boundary condition considered in this article is
not a properly embedded, but only an immersed submanifold.  The main
idea is to restrict to those holomorphic curves whose boundary lies
locally always on a single leaf of the immersed submanifold.  We can
then easily adapt standard results.

Let $(W, J)$ be an almost complex manifold, and let $L$ be a compact
manifold with $2 \dim L = \dim W$.

\begin{defi}
  An \textbf{immersed totally real submanifold} is an immersion
  $\phi:\, L \looparrowright W$ such that
  \begin{equation*}
    \bigl(D\phi\cdot T_xL\bigr)
    \oplus \bigl(J\cdot D\phi\cdot T_xL\bigr) = T_{\phi(x)} W
  \end{equation*}
  at every $x\in L$.
\end{defi}

Let $\phi:\, L \looparrowright W$ be a totally real immersed
submanifold with self-intersection set $\selfintersection{L}$.  Choose
a (not necessarily connected) submanifold $A\hookrightarrow L$ that is
disjoint from $\selfintersection{L}$.  Let $\Sigma$ be a Riemann
surface with $N$ boundary components $\partial\Sigma_1, \dotsc,
\partial\Sigma_N$, and choose on each boundary component a marked
point $z_j \in \partial\Sigma_j$.  Then define $\mathcal{B}(\Sigma; L;
A)$ to be the set of maps
\begin{equation*}
  u:\, (\Sigma, \partial\Sigma) \mapsto \bigl(W, \phi(L)\bigr)
\end{equation*}
for which the boundary circles $\left.u\right|_{\partial\Sigma}$ can
be lifted to continuous loops $c:\, \partial \Sigma \to L$ such that
$\phi\circ c = \left.u\right|_{\partial\Sigma}$, and $c(z_j) \in A$.

Note that with our conditions the lift of the boundary circles
$\left.u\right|_{\partial\Sigma}$ is unique, because if there were two
different loops $c,c':\, \partial \Sigma_j \to L$ with $\phi\circ c =
\phi\circ c'$, and $c(z_j),c'(z_j) \in A$, it follows that the set
$\bigl\{z\in\partial\Sigma_j\bigm|\, c(z) = c'(z)\bigr\}$ contains the
point $z_j$, and is hence non-empty.  Furthermore this set is closed,
because it is the preimage of the diagonal $\triangle_W := \{(p,p)|\,
p\in W\}$ under the map $c\times c': \partial \Sigma_j\times \partial
\Sigma_j \to W \times W$ intersected with the diagonal
$\triangle_{\partial \Sigma_j}$.  Finally, $L$ can be covered by open
sets on each of which the immersion $\phi$ is injective, and hence if
$c(z) = c'(z)$ there is also an open neighborhood of $z$ on which both
paths coincide.  It follows that $c$ and $c'$ are equal.

We have to prove that $\mathcal{B}(\Sigma; L; A)$ is a Banach manifold
by finding a suitable atlas.  To define a chart around a map $u_0\in
\mathcal{B}(\Sigma; L; A)$, construct first a Banach space $B_{u_0}$
by considering the space of sections in $E := u_0^{-1} TW$ satisfying
the following boundary condition: Choose the unique collection of
loops $c$ that satisfy $\Phi\circ c =
\left.u_0\right|_{\partial\Sigma}$.  We can define a subbundle $F\le
\left.E\right|_{\partial\Sigma}$ over the boundary of the surface by
pushing $T_{c(z)}L$ with $D\phi$ into $E$.  We require the sections
$\sigma:\, \Sigma \to E$ to lie along the boundary $\partial\Sigma$ in
the subbundle $F$, and to be at the marked points $z_j \in
\partial\Sigma_j$ tangent to $A$.

Our aim will be to map these sections in a suitable way into
$\mathcal{B}(\Sigma; L; A)$.  For this, we first choose a Riemannian
metric on $L$ for which $A$ is totally geodesic.  Then we extend it to
a product metric on $\partial\Sigma \times L$.  There is an
$\epsilon_1 > 0$ such that $\phi$ restricted to any $\epsilon_1$--disk
centered at an $c(z)$ is an embedding.  Furthermore, we find an
$\epsilon_2>0$ such that $d\bigl(c(z),c(z')\bigr) < \epsilon_1/3$ for
any pair of points $z,z'\in\partial\Sigma$ such that $d(z,z') <
\epsilon_2$.  Let $\epsilon$ be smaller than $\min\{\epsilon_1/3,
\epsilon_2\}$, and let $U_\epsilon(c)$ be the $\epsilon$--neighborhood
of the loops $\bigl\{\bigl(z, c(z)\bigr)\bigr\}\subset \partial\Sigma
\times L$, i.e., the collection of all points $(z',x')$ that lie at
most at distance $\epsilon$ from the set of loops.  The restriction of
the immersion
\begin{equation*}
  \id\times \phi:\, \partial\Sigma\times L \looparrowright
  \Sigma\times W,\, (z,x) \mapsto \bigl(z,\phi(x)\bigr)
\end{equation*}
to $U_\epsilon(c)$ defines an embedded submanifold of $\Sigma\times
W$, because any two points $(z,x), (z',x')\in U_\epsilon(c)$ for which
$\bigl(z,\phi(x)\bigr) = \bigl(z',\phi(x')\bigr)$, obviously satisfy
$z = z'$, and as we will show $x$ and $x'$ both lie in an
$\epsilon_1$--disk around $c(z)$ such that $x = x'$.  Let
$\bigl(z_0,c(z_0)\bigr)$ be a point for which
$d\bigl(\bigl(z_0,c(z_0)\bigr), (z,x)\bigr) < \epsilon$, then using
the triangle inequality we get
\begin{equation*}
  d\bigl(\bigl(z,c(z)\bigr), (z,x)\bigr) \le
  d\bigl(\bigl(z_0,c(z_0)\bigr), (z,c(z))\bigr)
  + d\bigl(\bigl(z_0,c(z_0)\bigr), (z,x)\bigr) < \epsilon_1 \;,
\end{equation*}
which shows that $(z,x)$ lies closer than $\epsilon_1$ to $(z,c(x))$.

Now we can push the metric from $U_\epsilon(c)$ forward and extend it
to one on $\Sigma\times W$, so that $\bigl(\id\times
\phi\bigr)\bigl(U_\epsilon(c)\bigr)$ will be totally geodesic.

Let $\sigma\in B_{u_0}$ be one of the sections of $E$ described above.
If $\sigma$ is sufficiently small, then applying the geodesic
exponential map to the section $(0, \sigma)$ in $T(\Sigma\times E)$,
and then projecting to the $W$--component gives a map that lies in the
space $\mathcal{B}(\Sigma; L; A)$.  The construction described gives a
bijection between small sections and maps in $\mathcal{B}(\Sigma; L;
A)$ close to $u_0$.  The reason is that there is a smooth map that
allows us to regard any manifold in $M_1 \times M_2$ tangent to
$M_1\times \{x_2\}$ at $(x_1,x_2)$ as a graph over $M_1\times \{x_2\}$
in a neighborhood of that point.

Since we do not see locally the other intersection branches it follows
that the Cauchy Riemann equation defines a Fredholm operator on
$\mathcal{B}(\Sigma; L; A)$.  For a generic adapted almost complex
structure $J$, it follows that the moduli space
\begin{equation*}
  \solutions(\Sigma; L; A) = \bigl\{ u \in \mathcal{B}(\Sigma; L; A)
  \bigm|\,  \bar \partial_J u = 0 \bigr\}
\end{equation*}
is a smooth manifold.  In our case, we then have that $\moduli_\gamma
:= \solutions(\Disk; \GPS; \gamma) /G$ is a $1$--dimensional manifold.

\subsection{The boundary of the GPS}
\label{sec: boundary of GPS}

The standard definition of the plastikstufe requires the boundary
$\partial\plastikstufe{S}$ to be a regular leaf of the foliation
\cite{NiederkruegerPlastikstufe}.  That way, $\plastikstufe{S}
-S\times \{0\}$ is a totally real manifold, and gives thus a Fredholm
boundary condition for regarding holomorphic disks, at the same time
smooth holomorphic disks in the moduli space have to be transverse to
the foliation so that they cannot touch the boundary.

In our definition of the GPS, we want the contact form instead to
vanish on the boundary $S\times \partial \Disk$.  In this section, we
will show by an intersection argument that there is a neighborhood of
the boundary which blocks any holomorphic curve from entering it.  Our
definition thus implies at this point effectively the same statement
as the standard one.

\begin{proposition}
  \label{moser am rand der plastikstufe} Let $F$ be a maximally
  foliated submanifold inside a contact manifold $(M,\alpha)$.  Assume
  one of its boundary components to be diffeomorphic to $N\cong
  S\times\S^1$, with $S$ a closed manifold, such that the restriction
  $\left.\alpha\right|_{TF}$ of the contact form has the following
  properties on the collar neighborhood $N \times [0,\epsilon) =
  \{(s,e^{i\phi},r)\}$
  \begin{itemize}
  \item [(1)] $\left.\alpha\right|_{TF}$ vanishes on $N$ (in
    particular $N$ is a Legendrian submanifold), and
  \item [(2)] the interior of the collar is foliated and the leaves
    are $S\times\{e^{i\phi_0}\}\times(0,\epsilon)$, for any fixed
    $e^{i\phi_0}\in\S^1$.
  \end{itemize}

  Then there is a neighborhood of $N$ in $M$ that is contactomorphic
  to an open subset of
  \begin{equation*}
    \bigl(\R \times T^*S \times\S^1\times \R,\,
    dz + \lcan - r\,d\phi\bigr)
  \end{equation*}
  such that $N\times [0,\epsilon)$ lies in this model in $\{0\}\times
  S\times \S^1\times[0,\epsilon)$.
\end{proposition}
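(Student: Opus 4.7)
The plan is a relative Moser argument. I first construct a smooth map
\begin{equation*}
  \Phi:\, \mathcal V \to \mathcal U(N)\subset M
\end{equation*}
from a neighborhood $\mathcal V$ of $\{0\}\times S\times\S^1\times\{0\}$ in the model $\R\times T^*S\times\S^1\times\R$ onto a neighborhood of $N$, in such a way that $\Phi$ sends the model collar $C_0 := \{0\}\times S\times\S^1\times[0,\epsilon)$ diffeomorphically onto $N\times[0,\epsilon)\subset F$ and the pull-back $\Phi^*\alpha$ agrees with the model form
\begin{equation*}
  \alpha_0 := dz + \lcan - r\,d\phi
\end{equation*}
on the whole of $T\mathcal V|_{C_0}$. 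Once this is done, the standard relative Moser argument applied to $\alpha_t := (1-t)\alpha_0 + t\,\Phi^*\alpha$ produces a diffeomorphism $\psi$ of a neighborhood of $C_0$ in $\mathcal V$ that fixes $C_0$ pointwise and satisfies $\psi^*\Phi^*\alpha = g\,\alpha_0$ for a positive function $g$; then $\Phi\circ\psi$ is a contactomorphism carrying $C_0$ onto the real collar, and the proposition follows.

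Building $\Phi$ takes three steps. First I normalize the collar: since each leaf $S\times\{e^{i\phi_0}\}\times(0,\epsilon)$ is Legendrian, the pull-back $\alpha|_{TF}$ kills $\partial_s$ and $\partial_r$ and hence has the form $h(s,\phi,r)\,d\phi$ with $h(s,\phi,0)=0$; the contact condition together with the foliation forces $\partial_r h\neq 0$ at $r=0$, so after reparametrizing $r$ fibrewise I may assume $h=-r$, giving $\alpha|_{TF}=-r\,d\phi$. Second, I introduce the Reeb coordinate: since $N$ is Legendrian, $\Reeb$ is transverse to $F$ along $N$, and its time-$z$ flow starting from the collar provides the coordinate $z$ with $d\Phi(\partial_z)=\Reeb$ along $C_0$. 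Third, I produce the cotangent-fibre directions. Along $N$ I choose a Lagrangian subbundle $L\subset\xi|_N$ that contains $\partial_r$ and is complementary to $TN$; the isomorphism $L\cong T^*N$ given by $v\mapsto d\alpha(v,\cdot)|_{TN}$ sends $\partial_r$ to $-d\phi$ by the collar normalization, and the preimage $E\subset L$ of $T^*S\subset T^*N$ is the desired rank-$(n-1)$ subbundle. Such an $L$ exists by fibrewise linear algebra, using that $TN\oplus\R\partial_r$ is coisotropic in $\xi|_N$. I extend $E$ smoothly to a rank-$(n-1)$ subbundle of $\xi|_F$ (using any connection on $\xi|_F$) and flow along this extension to introduce the $p$-coordinates; because the extension stays inside $\xi$, the vector $d\Phi(\partial_{p_i})$ lies in $\xi$ along $C_0$.

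A direct evaluation now shows that $\alpha_0$ and $\Phi^*\alpha$ agree on every coordinate vector at each point of $C_0$: both pair $\partial_z$ to $1$, pair $\partial_\phi$ to $-r$, and annihilate $\partial_{s_i}$, $\partial_r$, and $\partial_{p_i}$. This is exactly the zero-jet hypothesis needed for the relative Moser theorem applied to a submanifold, and the argument of the first paragraph finishes the proof.

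The main obstacle is step three: one has to arrange simultaneously that $L$ is Lagrangian in $\xi|_N$, contains $\partial_r$, is complementary to $TN$, and induces the correct duality with $T^*N$, and then one must extend $E$ along the collar while staying inside $\xi$. Both are routine once the geometric set-up is fixed, and depend only on the fact that $N$ is Legendrian, that $TN\oplus\R\partial_r$ is coisotropic in $\xi|_N$, and that $\xi|_F$ admits a smooth connection; with these in hand the Moser step proceeds unobstructed.
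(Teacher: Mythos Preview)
Your proof is correct and follows essentially the same three-step strategy as the paper: normalize $\alpha|_{TF}$ to $-r\,d\phi$ on the collar, trivialize the normal bundle of the collar inside $M$ using the Reeb direction together with a suitable complement in $\xi$, and then run a Moser argument that fixes the collar pointwise. The only cosmetic differences are that the paper produces the transverse $T^*S$--directions by choosing a compatible almost complex structure $J$ on $\xi$ preserving $\langle\partial_r,\partial_\phi\rangle$ (so that $J\cdot TS$ plays the role of your bundle $E$) rather than via a Lagrangian complement, and that the paper phrases the Moser step through an explicit Hamiltonian $H_t$ solved along Reeb trajectories instead of invoking the relative Moser theorem directly; both variants are standard and yield the same conclusion.
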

\begin{proof}
  First note that it is clear that the restriction of the contact form
  can be written on the collar neighborhood as
  \begin{equation*}
    \left.\alpha\right|_{TF} = f\,d\phi\;,
  \end{equation*}
  with a smooth function $f:\,N\times [0,\epsilon) \to \R_{\ge 0}$
  which only vanishes on $N\times\{0\}$.  The $2$--form $d\alpha$ is a
  symplectic form on the $(2n)$--dimensional kernel $\xi=\ker\alpha$,
  so in particular $\left.d\alpha\right|_{TF}$ cannot vanish on any
  point $p\in N$, because otherwise $T_pF$ would be an
  $(n+1)$--dimensional isotropic subspace of $(\xi_p,d\alpha)$.  It
  follows that $\partial_r f(p,0) > 0$, and so the map $\Phi:\,
  N\times[0,\epsilon) \to N\times\R,\, (p,r)\mapsto (p,f(p,r))$ is
  after a suitable restriction a diffeomorphism with inverse
  $\Phi^{-1}(p,r) = (p, f^{-1}_p(r))$, where we wrote $f_p(\cdot) :=
  f(p,\cdot)$.  The pull-back of $\left.\alpha\right|_{TF}$ under
  $\Phi^{-1}$ gives
  \begin{align*}
    \left(\Phi^{-1}\right)^*\left(f\,d\phi\right) &=
    f(p,f^{-1}_p(r))\,d\phi = r\,d\phi\;.
  \end{align*}
  Thus, we can assume after changing the orientation of $\S^1$ that
  $\alpha$ is of the form $-r\,d\phi$ on the collar neighborhood.

  Consider now the normal bundle of the submanifold $N\times
  [0,\epsilon)$ in $M$.  A trivialization can be obtained by realizing
  first that the Reeb field $\Reeb$ is transverse to $N$, because
  $\left.TF\right|_N$ lies in the contact structure, so that there is
  a small neighborhood, where $\Reeb$ is transverse to $F$.  Choose
  now an almost complex structure $J$ on $\xi = \ker\alpha$ that is
  compatible with $d\alpha$ such that $J$ leaves the space on $N$
  spanned by $\langle
  \partial_r,
  \partial_\phi\rangle$ invariant.  The submanifolds
  $S_{(e^{i\phi},r)} := S\times\{(e^{i\phi},r)\}$, with
  $(e^{i\phi},r)$ fixed, are all tangent to the contact structure, and
  it follows that $J\cdot TS_{(e^{i\phi},r)}$ is transverse to $F$,
  because if there was an $X\in TS$, such that $JX\in TF$, then
  \begin{align*}
    0< d\alpha(X,JX) = - dr\wedge d\phi (X,JX) = 0\;.
  \end{align*}
  With the tubular neighborhood theorem it follows that there is an
  open set around $N\times [0,\epsilon)$ diffeomorphic to $\R\times
  T^*S \times \S^1\times (-\epsilon,\epsilon)$, and the set
  $N\times[0,\epsilon)$ lies in $\{0\}\times S\times \S^1\times
  [0,\epsilon)$.
  
  In the final step, we use a version of the Moser trick explained for
  example in \cite[Theorem~2.24]{Geiges_Handbook} to find a vector
  field $X_t$ that isotopes the given contact form into the desired
  one $dz + \lcan - r\,d\phi$.  Let $\alpha_t$, $t\in[0,1]$, be the
  linear interpolation between both $1$--forms.  Assume there is an
  isotopy $\psi_t$ defined around $N$ such that $\psi_t^*\alpha_t =
  \alpha_0$.  The field $X_t$ generating this isotopy satisfies the
  equation
  \begin{equation*}
    \lie{X_t} \alpha_t +\dot \alpha_t = 0\;.
  \end{equation*}
  By writing $X_t = H_t\,R_t + Y_t$, where $H_t$ is a smooth function,
  $R_t$ is the Reeb vector field of $\alpha_t$, and $Y_t\in\ker
  \alpha_t$, we obtain plugging then $R_t$ into the equation above
  \begin{align*}
    dH_t(R_t) = -\dot \alpha_t(R_t)\;.
  \end{align*}
  The vector field $Y_t$ is completely determined by $H_t$, because
  $Y_t$ satisfies the equations
  \begin{align*}
    \iota_{Y_t}\alpha_t &= 0\;, \\
    \iota_{Y_t}d\alpha_t &= - dH_t - \dot \alpha_t\;,
  \end{align*}
  hence it suffices to find a suitable function $H_t$.  Consider the
  $1$-parameter family of Reeb fields $R_t$ as a single vector field
  on the manifold $[0,1]\times \bigl(\R\times
  T^*S\times\S^1\times\R\bigr)$.  Since $R_t$ is transverse to the
  submanifold $\widetilde N := [0,1]\times \bigl(\{0\}\times
  T^*S\times\S^1\times\R\bigr)$ along $[0,1]\times N\times
  [0,\epsilon)$, it is possible to define a solution $H_t$ to
  $dH_t(R_t) = -\dot \alpha_t(R_t)$, such that
  $\left.H_t\right|_{\widetilde N}\equiv 0$.  In fact, because
  $\left.\dot\alpha\right|_{N\times[0,\epsilon)} = 0$, it follows that
  $\left.dH_t\right|_{N\times[0,\epsilon)} = 0$, and so the vector
  field $X_t = H_t\,R_t + Y_t$ vanishes on $N\times[0,\epsilon$.
  Hence $X_t$ can be integrated on a small neighborhood of the collar
  $N\times[0,\epsilon)$, and $N\times[0,\epsilon)$ is not moved under
  the flow, which finishes the proof of the proposition.
\end{proof}

We can easily choose a compatible adapted almost complex structure $J$
on the symplectization
\begin{equation*}
  \Bigl(W=\R\times (\R\times T^*S\times \S^1\times\R),\, \omega =
  d\bigl(e^t\,(dz + \lcan - r\,d\phi)\bigr)\Bigr)\;,
\end{equation*}
with coordinates $\{(t,z;\POS,\MOM;e^{i\phi},r)\}$.  Observe that the
Reeb field is given by $\Reeb = e^{-t}
\partial_z$, and that the kernel of $\alpha$ is spanned by the vectors
$X-\lcan(X)\,\partial_z$ for all $X\in T(T^*S)$, $\partial_\phi +
r\,\partial_z$ and $\partial_r$.  Choose a metric $g$ on $S$, and let
$J_0$ be the $d\lcan$--compatible almost complex structure on $T^*S$
constructed in~\cite[Appendix~B]{NiederkruegerPlastikstufe}.

With this, we can define a $J$ on $W$ by setting $J\partial_t =
\Reeb$, $J\Reeb = -\partial_t$, $J\,\partial_r = -\partial_\phi -
r\,\partial_z$, $J(\partial_\phi + r\,\partial_z) =
\partial_r$, and $J (X -\lcan(X)\,\partial_z) = J_0 X -
\lcan(J_0X)\,\partial_z$.  The last two equations can also be written
as $J\,\partial_\phi = \partial_r + re^t\,\partial_t$ and $JX = J_0 X
- e^t\,\lcan(X)\,\partial_t - \lcan(J_0X)\,\partial_z$.

As a matrix, the complex structure $J$ takes the form
\begin{align*}
  J(t;z;\POS,\MOM;e^{i\phi},r) &=
    \begin{pmatrix}
      0  & -e^t & - e^t\,\lcan & re^t & 0 \\
      e^{-t} & 0 & -\lcan\circ J_0 & 0 & -r \\
      0 & 0 & J_0 & 0 & 0 \\
      0 & 0 & 0 & 0 & -1 \\
      0 & 0 & 0 & 1 & 0
    \end{pmatrix}
    \;.
\end{align*}
Note that the center row and column represent linear maps from or to
$T(T^*S)$.  A lengthy computation shows that this structure is
compatible with $\omega$.

\begin{proposition}
  \label{fast komplexe struktur beim rand der plastikstufe} The almost
  complex manifold $(W,J)$ can be mapped with a biholomorphism to
  \begin{align*}
    (\C\times T^*S\times\C^*, i\oplus J_0\oplus i)\;.
  \end{align*}
  In this model, the contact manifold $M$ is described by the set
  \begin{align*}
    M &\cong \Bigl\{(x + iy;\POS,\MOM; w) \in \C\times T^*S\times\C^*
    \Bigm| x = -\frac{2+\norm{\MOM}^2 + (\ln\abs{w})^2}{2}\Bigr\}\;,
    \intertext{and the maximally foliated submanifold $F$ is} F &\cong
    \Bigl\{(x ;\POS,0; w) \in \R\times S\times\C^* \Bigm| x = -\frac{2
      + (\ln\abs{w})^2}{2},\; \abs{w}\ge 1\Bigr\} \subset \C\times
    T^*S\times\C^*\;.
  \end{align*}
\end{proposition}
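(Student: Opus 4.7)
The plan is to exhibit the biholomorphism explicitly and check all the required conditions by a direct computation. Guided by the shape of the claimed description of $M$, I would define
\begin{equation*}
  \Psi:\, W \to \C \times T^*S \times \C^*, \quad
  \bigl(t,z;\POS,\MOM;e^{i\phi},r\bigr) \longmapsto
  \Bigl(-e^{-t} - \tfrac{1}{2}\norm{\MOM}^2 - \tfrac{1}{2}r^2 + iz;\;(\POS,\MOM);\; e^{r-i\phi}\Bigr).
\end{equation*}
It is immediate that $\Psi$ is a smooth embedding onto an open subset of the model, since one recovers $z = \ImaginaryPart\zeta$, $r = \ln|w|$, $\phi = -\arg w$, and finally $t$ from $\RealPart\zeta$; the statement of the proposition should be read accordingly as providing a biholomorphic embedding.

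The central step is to verify that $D\Psi$ intertwines $J$ with $i\oplus J_0\oplus i$. Since the middle component of $\Psi$ is the identity on $T^*S$ and none of $J\partial_t, J\partial_z, J\partial_r, J\partial_\phi$ has any $T^*S$-component, compatibility on the $T^*S$-factor is immediate from the formula $JX = J_0 X - e^t\lcan(X)\partial_t - \lcan(J_0 X)\partial_z$. The two remaining conditions are $d\zeta\circ J = i\,d\zeta$ and $dw\circ J = i\,dw$. Since $w$ depends only on $(r,\phi)$, the second reduces to a short calculation on $\partial_r$ and $\partial_\phi$ using $J\partial_r = -\partial_\phi - r\partial_z$ and $J\partial_\phi = \partial_r + re^t\partial_t$. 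For the first, the evaluations on $\partial_t, \partial_z, \partial_r, \partial_\phi$ all collapse to elementary algebraic identities such as $e^{-t}\cdot e^t = 1$ and $-r + re^t\cdot e^{-t} = 0$.

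The hard part will be the identity $d\zeta\circ J = i\,d\zeta$ on $v\in T(T^*S)$. Expanding both sides, the real and imaginary parts reduce to the single condition
\begin{equation*}
  d\bigl(-\tfrac{1}{2}\norm{\MOM}^2\bigr) = -\lcan\circ J_0 \quad\text{on } T^*S,
\end{equation*}
which is a standard property of the specific $d\lcan$-compatible almost complex structure $J_0$ constructed in Appendix~B of \cite{NiederkruegerPlastikstufe}; it is this identity that motivates the choice of $\Psi$ and is the one non-routine ingredient of the proof. Once it is granted, the descriptions of $M$ and $F$ follow by substitution: at $t=0$ the $\C$-coordinate simplifies to $-(2 + \norm{\MOM}^2 + (\ln|w|)^2)/2 + iz$, giving the claimed hypersurface, and further restricting to $\MOM=0$, $z=0$, $r\ge 0$ (the collar of $F$) produces the claimed submanifold with $|w|\ge 1$.
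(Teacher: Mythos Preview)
Your proposal is correct and follows essentially the same route as the paper: write down the explicit diffeomorphism and verify directly that it intertwines $J$ with the split structure, the crucial input being the identity $dF\circ J_0 = -\lcan$ (equivalently $\lcan\circ J_0 = dF$, with $F=\tfrac12\norm{\MOM}^2$) from \cite[Appendix~B]{NiederkruegerPlastikstufe}. The paper packages the verification as the matrix computation $D\Phi\cdot J\cdot D\Phi^{-1}$, while you check equivalently that the coordinate one-forms $d\zeta$ and $dw$ are $J$--complex linear; these are the same calculation. One small point worth noting: your choice $w=e^{r-i\phi}$ differs from the paper's $w=e^{r}e^{i\phi}$ by complex conjugation on the $\C^*$--factor, and in fact your sign is the one that literally yields $+i$ on that factor (the paper's map produces $-i$ there); since the subsequent argument only uses $\abs{w}$ and the holomorphicity of the $\C$--projection, this has no effect downstream.
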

\begin{proof}
  The desired biholomorphism is
  \begin{align*}
    \Phi(t,z;\POS,\MOM;e^{i\phi},r) &= (\tilde t,\tilde z;
    \tilde\POS,\tilde\MOM; \tilde r e^{i\tilde\phi}) = \left(-e^{-t} -
      F -\frac{r^2}{2},z; \POS,\MOM; e^re^{i\phi}\right)\;,
  \end{align*}
  with the function
  \begin{equation*}
    F:\, T^*M \to \R,\,\, (\POS,\MOM) \mapsto
    \frac{\norm{\MOM}^2}{2}\;.
  \end{equation*}
  It brings $J$ into standard form with respect to the coordinate
  pairs $(\tilde r e^{i\tilde\phi})$, $(\tilde t,\tilde z)$.  More
  explicitely, by pulling back $J$ under the diffeomorphism
  \begin{align*}
    \Phi^{-1}(\tilde t,\tilde z;\tilde\POS,\tilde\MOM;\tilde
    re^{i\tilde \phi}) &= (t,z; \POS,\MOM; e^{i\phi},r) =
    \left(-\ln(-\tilde t- F - \frac{(\ln\tilde r)^2}{2}),\tilde
      z;\tilde\POS,\tilde\MOM; e^{i\tilde \phi},\ln\tilde r \right)\;,
  \end{align*}
  i.e.\ by computing $D\Phi\cdot J\cdot D\Phi^{-1}$, we obtain the
  matrix
  \begin{align*}
    D\Phi\cdot J\cdot D\Phi^{-1} &=
    \begin{pmatrix}
      0 & -1 & -\lcan - dF\circ J_0 & 0 & 0\\
      1 & 0 &  dF -\lcan\circ J_0 & 0 & 0\\
      0 & 0 & J_0 & 0 & 0 \\
      0 & 0 & 0 & 0 & - 1/\tilde r \\
      0 & 0 & 0 & \tilde r & 0
    \end{pmatrix}\;,
  \end{align*}
  and since, according to
  \cite[Appendix~B]{NiederkruegerPlastikstufe}, $dF\circ J_0 =
  -\lcan$, this gives the desired normal form.
\end{proof}

\begin{proposition}
  Let $F$ be a maximally foliated submanifold in a contact manifold
  $(M,\alpha)$.  Let $(W,\omega)$ be a symplectic filling of $M$, and
  assume $F$ to have a boundary component of the type explained in
  Proposition~\ref{moser am rand der plastikstufe}.  There is a
  neighborhood $U$ of the boundary with an almost complex structure,
  which prevents any holomorphic curve $u:\,(\Sigma,\partial \Sigma)
  \to (W,F)$ that has in $F$ contractible boundary components, from
  entering~$U$.
\end{proposition}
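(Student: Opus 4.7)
My plan is to deduce the proposition from an application of the maximum principle to the $w$-coordinate after passing to the explicit model of Proposition~\ref{fast komplexe struktur beim rand der plastikstufe}. In that biholomorphic model, $(W,J)$ sits inside $(\C\times T^*S\times\C^*,\,i\oplus J_0\oplus i)$, the foliated piece reads
\begin{equation*}
F=\Bigl\{(x;\POS,0;w)\in\R\times S\times\C^*:\,x=-\tfrac{1}{2}\bigl(2+(\ln|w|)^2\bigr),\,|w|\ge 1\Bigr\},
\end{equation*}
and its boundary component $N$ sits over the unit circle $\{|w|=1\}$. The projection $\pi_w:W\to\C^*$ is $J$-holomorphic, so for any $J$-holomorphic $u:(\Sigma,\partial\Sigma)\to(W,F)$ the composition $w\circ u:\Sigma\to\C^*$ is holomorphic with boundary values in $\{|w|\ge 1\}$. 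Since $F$ deformation retracts onto $S\times\S^1$ with $\pi_w$ detecting the $\S^1$-factor through winding about the origin, each boundary loop of $u$ that is contractible in $F$ has zero winding for $\pi_w\circ u$; this lets me pick a single-valued holomorphic branch of $\log(w\circ u)$ on each connected component of $\Sigma$.

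Setting $h:=\log|w\circ u|$ gives a harmonic function on $\Sigma$ with $h\ge 0$ on $\partial\Sigma$, so the minimum principle yields $h\ge 0$ throughout $\Sigma$. The strict minimum principle further forces $h>0$ in the interior unless $w\circ u$ is constantly equal to some $w_0$ with $|w_0|=1$, and I would rule out this degenerate case as follows. Such a $u$ factors through the complex slice $\C\times T^*S\times\{w_0\}$, with its boundary mapped into the copy of the zero section of $T^*S$ sitting at $\{y=0,\,x=-1,\,\MOM=0\}$. The $T^*S$-component of $u$ is a $J_0$-holomorphic disk with boundary on this exact Lagrangian, so Stokes applied to the primitive $\lcan$ forces its $d\lcan$-area to vanish and it must be constant. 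The $\C$-component $u_1$ is a holomorphic function taking the boundary value $-1$, whence the maximum principle applied to $|u_1+1|$ forces $u_1\equiv -1$ throughout. Thus every non-constant $u$ with contractible boundary in $F$ satisfies $|w\circ u|>1$ in the interior of $\Sigma$.

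Taking $U$ to be a small one-sided neighborhood of $N$ in $W$ whose open interior is contained in $\{|w|<1\}\cap W$---which is possible because $\{|w|<1\}$ is disjoint from $F$ while $N$ lies in its closure---the bound $|w\circ u|\ge 1$ just established prevents any non-constant holomorphic curve with contractible boundary in $F$ from having its image meet $U$. The main difficulty in this plan is the degenerate case: one must combine the exact-Lagrangian argument in $T^*S$ with the maximum principle on the $\C$-factor to force constancy when $|w\circ u|\equiv 1$. Once that is handled, the rest of the proof is the standard log-modulus barrier, analogous to the one used at the core of the plastikstufe in~\cite{NiederkruegerPlastikstufe}.
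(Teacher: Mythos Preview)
Your argument has a genuine gap. The biholomorphic model of Proposition~\ref{fast komplexe struktur beim rand der plastikstufe} identifies only a \emph{small neighborhood} of $N$ inside the filling with an open subset of $\C \times T^*S \times \C^*$; note that the symbol $W$ in that proposition refers to the symplectization of the local contact model, not to the global filling appearing in the present statement. Consequently the projection $\pi_w$ and the product almost complex structure $i\oplus J_0\oplus i$ are only available on this neighborhood, and a general holomorphic curve $u:(\Sigma,\partial\Sigma)\to(W,F)$ may well leave it. Thus $w\circ u$ is not a globally defined holomorphic function on $\Sigma$, and your minimum-principle argument for $h=\log|w\circ u|$ breaks down: on the portion of $\partial\bigl(u^{-1}(\text{model})\bigr)$ lying in the interior of $\Sigma$ there is no a priori lower bound for $h$. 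The same objection applies to your degenerate case, where you analyse the $\C$- and $T^*S$-components as if they were defined on all of~$\Sigma$. (A secondary issue: your proposed $U$, with interior contained in $\{|w|<1\}$, is not actually a neighborhood of $N$ in $W$, since $N$ sits at $|w|=1$.)

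The paper's proof takes a different route precisely to sidestep this difficulty. It works with the first factor $u_1$ rather than with $w$, using the codimension-two $J$-holomorphic leaves $N_C=\{C\}\times T^*S\times\C^*$. The point is that each $N_C$ can be completed inside $W$ to a cycle that is trivial in $H_{2n-2}(W)$, by adjoining the piece $\widetilde{N}_C\cap M$ lying in the convex boundary (which is disjoint from $F$, so the capped-off curve meets it only along the $J$-holomorphic part $N_C$). Positivity of intersection with a homologically trivial cycle then forces $\ImaginaryPart u_1$---and hence $u_1$---to be constant, and only \emph{after} this does one use convexity of $M$ to conclude that $u$ is trapped entirely in the model neighborhood, where the factorwise analysis you propose becomes legitimate. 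This intersection-theoretic step is exactly the global ingredient that replaces the barrier you hoped $\log|w|$ would provide.
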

\begin{proof}
  Choose for the neighborhood $U$ of $\partial F$ the model described
  in Proposition~\ref{fast komplexe struktur beim rand der
    plastikstufe} together with the almost complex structure $J$ given
  there.  This $J$ can be easily extended over the whole filling
  $(W,\omega)$.  Note that the neighborhood is foliated by
  $J$-holomorphic codimension~$2$ manifolds of the form $N_C :=
  \{C\}\times T^*S\times\C^*$ for any fixed complex number $C$.

  Let now $u:\, (\Sigma,\partial\Sigma) \to (W,F)$ be a holomorphic
  curve that has in $F$ contractible boundary, and assume that $u$
  intersects the model neighborhood $U$.  Write the restriction of $u$
  to $V:= u^{-1}(U) \subset\Sigma$ as
  \begin{equation*}
    \left.u\right|_V:\, V \to \C\times T^*S\times\C^*,\,
    z \mapsto \bigl(u_1(z);\POS(z),\MOM(z);u_2(z)\bigr)   \;.
  \end{equation*}
  First, we will show that the imaginary part of the first coordinate
  $u_1$ is constant.  If it was not, then there would be by Sard's
  Theorem a regular value $c_y\in\R$, such that $u_1^{-1}(\R+ic_y)$
  consists of finitely many regular $1$--dimensional submanifolds.
  The real part of $u_1$ changes along these submanifolds, because
  $u_1$ satisfies the Cauchy-Riemann equation.  Hence it is possible
  to find a complex number $c_x+ic_y\in\C$ such that $N_{c_x+ic_y}$
  has finitely many transverse intersection points with $u$.  By our
  assumption, it is possible to cap off the holomorphic curve $u$ by
  adding disks that lie inside~$F$.  Note that $N_{c_x+ic_y}$ is the
  boundary of the submanifold
  \begin{equation*}
    \widetilde N_{c_x+ic_y} := \bigl\{x+ic_y\bigm|\, x\in[c_x,\infty)\bigr\}
    \times T^*S\times\C^*\;.
  \end{equation*}
  The intersection of $\widetilde N_{c_x+ic_y}$ with $M$ gives a
  submanifold that is disjoint from $F$, and this submanifold together
  with $N_{c_x+ic_y}$ represents the trivial homology class in
  $H_{2n-2}(W)$.

  The only intersections between the capped off holomorphic curve and
  $\partial (W\cap \widetilde N_{c_x+ic_y})$ lie in the subsets, where
  both classes are represented by $J$-holomorphic manifolds.  Hence
  the intersection number is positive, but since $\partial (W\cap
  \widetilde N_{c_x+ic_y})$ represents the trivial class in homology,
  this is clearly a contradiction.

  It follows that $\ImaginaryPart u_1$ is constant, and with the
  Cauchy-Riemann equation, we immediately obtain that the real part of
  $u_1$ must also be constant.  This in turn means that the
  holomorphic curve is completely contained in the neighborhood,
  because the only way that $u$ could not be completely contained in
  the neighborhood $U$ is if $\abs{u_2(z)}$ changes sufficiently or if
  $\MOM$ grows, but in both cases $u$ will hit the hypersurface $M$
  before leaving $U$.  Hence $u$ is contained in $U$.  Consider now
  the $T^*S$--part of $u$.  Since $u$ sits on $F$, it follows that the
  $T^*S$--part has boundary on the zero-section of $T^*S$, and so it
  has no energy, and is thus constant.  So far, it follows that $u(z)$
  can be written as $u(z) = (c_x+ic_y;\POS_0,0; f(z))$, where
  $f:\,\Disk \to \C^*$ such that $f(\S^1)$ lies in one of the circles
  of fixed radius $R$ or~$1/R$.  But in fact, only the circle of
  radius $R>1$ lies in $F$, hence all boundary component of $\Sigma$
  are mapped to the circle of radius $R$, and so by the maximum
  principle $\abs{f}^2$ is bounded by $R^2$, and by the boundary point
  lemma the derivative of $f$ along the boundary may nowhere vanish.
\end{proof}

\subsection{Bubbling analysis}\label{sec: bubbling}

To obtain compactness of our moduli space, we need to distinguish two
cases: Either the first derivatives of the sequence are uniformly
bounded from the beginning, and we have subsequence with a clean limit
(after adapting the standard result to the immersed boundary
condition), or if the first derivatives explode, we show that we do
find a global uniform bound on the derivatives if we reparametrize the
disks in a suitable way.

\begin{theorem}\label{theorem: convergence for bounded maps}
  Let $\Sigma$ be a Riemann surface that does not need to be compact,
  and may or may not have boundary.  Let $\Omega_k\subset \Sigma$ be a
  family of increasing open sets that exhaust $\Sigma$, i.e.,
  \begin{equation*}
    \cup_k \Omega_k = \Sigma  \text{ and } \Omega_k\subset \Omega_{k'}
    \text{ for $k\le k'$.}
  \end{equation*}
  Define $\partial \Omega_k := \Omega_k \cap \partial \Sigma$.  Let
  $(W,J)$ be a compact almost complex manifold that contains a totally
  real immersion $\phi:\, L \looparrowright W$ of a compact manifold
  $L$.

  Let $u_k$ be a sequence of holomorphic maps $u_k:\, \bigl(\Omega_k,
  \partial\Omega_k\bigr) \to \bigl(W, \phi(L)\bigr)$ whose derivatives
  are uniformly bounded on compact sets, i.e., if $K\subset \Sigma$ is
  a compact set, then there exists a constant $C(K) > 0$ such that
  \begin{equation*}
    \norm{Du_k(z)} \le C(K)
  \end{equation*}
  for all $k$ and all $z\in\Omega_k\cap K$.  Additionally assume that
  the restriction of $u_k$ to the boundary $\partial\Omega_k$ lifts to
  a collection of smooth paths $u^L_k:\, \partial\Omega_k \to L$ such
  that $\phi\circ u^L_k = \left.u_k\right|_{\partial \Omega_k}$.

  Then there exists a subsequence of $u_k$ that converges on any
  compact subset uniformly with all derivatives to a holomorphic curve
  $u_\infty:\, (\Sigma, \partial\Sigma) \to \bigl(W,\phi(L)\bigr)$,
  whose boundary lifts to a collection of smooth paths $u^L_\infty:\,
  \partial\Sigma \to L$, and the boundary paths $u_k^L$ also converge
  locally uniformly to $u^L_\infty$.
\end{theorem}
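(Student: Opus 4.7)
The plan is to prove this by a standard elliptic-bootstrap-plus-Arzelà--Ascoli argument, with the twist that one must carefully track the boundary lift in order to handle the immersed (rather than embedded) totally real condition. First, I would reduce the problem to local statements. Since $\Sigma$ can be exhausted by compact sets $K_1 \subset K_2 \subset \dotsm$ with $K_j \subset \Omega_{k}$ for all $k \ge k(j)$, a standard diagonal extraction reduces the claim to showing that, given any compact set $K \subset \Sigma$ (possibly meeting the boundary), some subsequence of $\left.u_k\right|_K$ converges in $C^\infty$ to a holomorphic map, whose boundary also lifts in the limit.

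For interior points of $\Sigma$ the argument is the usual one: a uniform $C^0$ bound on $Du_k$ together with Arzelà--Ascoli gives a subsequence converging in $C^0$ on compact interior subsets, and then elliptic bootstrapping for the Cauchy--Riemann operator promotes this to $C^\infty_{\mathrm{loc}}$ convergence. The main substance lies near $\partial\Sigma$. Here I would work in a small disk $D_z \subset \Sigma$ around a boundary point $z$. By hypothesis each $u_k$ has a smooth boundary lift $u_k^L:\,\partial\Omega_k \to L$, and since $\phi$ is an immersion and $\norm{Du_k}$ is bounded on $\overline{D_z}$, the derivatives of $u_k^L$ on $\partial D_z$ are controlled. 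Using Arzelà--Ascoli on $L$ (compact), a subsequence of $u_k^L$ converges uniformly on $\partial D_z$ to a continuous path $c:\,\partial D_z \cap \partial\Sigma \to L$. Because $\phi$ is a local embedding, after shrinking $D_z$ there is an open neighborhood $V \subset W$ of $c(\partial D_z \cap \partial\Sigma)$ in which $\phi(L)\cap V$ is an honest embedded totally real submanifold $L_V$. For $k$ large, $u_k(D_z) \subset V$ (by uniform convergence at the boundary together with the $C^0$ bound away from it, via a short Schwarz-lemma-type argument using the derivative bound), and hence on $D_z$ each $u_k$ is a $J$--holomorphic map into $W$ with boundary on the \emph{embedded} totally real submanifold $L_V$.

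At this point I would apply the classical boundary regularity theory for pseudo-holomorphic maps with embedded totally real boundary condition (for instance via the Schwarz reflection-type doubling trick, or by writing the boundary-value problem in Sobolev spaces and invoking the elliptic estimates for $\bar\partial_J$ with Lagrangian-type boundary). This gives uniform $C^{k,\alpha}$ bounds on $u_k$ on slightly smaller disks, and hence, by Arzelà--Ascoli again, a subsequence converging in $C^\infty_{\mathrm{loc}}$ to a holomorphic map $u_\infty$ with boundary on $L_V$. The pointwise boundary values of $u_\infty$ on $\partial D_z \cap \partial\Sigma$ lift uniquely to $L_V \subset L$, and by construction this lift is the $C^\infty$ limit of the $u_k^L$ on that piece of the boundary.

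Finally, I would combine these local pieces by a covering and diagonal argument: cover each compact $K_j \subset \Sigma$ by finitely many interior disks and finitely many boundary half-disks as above, extract subsequences successively on each, and then diagonalize over $j$ to get a subsequence of $u_k$ converging in $C^\infty_{\mathrm{loc}}$ on all of $\Sigma$ to a holomorphic curve $u_\infty$. The boundary lifts $u_k^L$, by the local uniqueness of the lift through $\phi$ (recorded in Section~\ref{sec: moduli space}), piece together into a globally defined smooth path $u_\infty^L:\,\partial\Sigma \to L$ satisfying $\phi \circ u_\infty^L = \left.u_\infty\right|_{\partial\Sigma}$, and the convergence $u_k^L \to u_\infty^L$ holds locally uniformly. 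The main obstacle is the second paragraph above: verifying that, for large $k$, the image $u_k(D_z)$ actually falls into a single embedded chart of $\phi(L)$ so that one is reduced to the classical embedded boundary regularity theory. Once that reduction is made, the rest is standard.
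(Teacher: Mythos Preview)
Your approach is essentially the same as the paper's: first extract a $C^0$--convergent subsequence (for both the maps and their boundary lifts) via Arzel\`a--Ascoli, then localize near the boundary to reduce to the classical embedded totally real case, and finally patch together by a finite cover plus diagonal extraction.

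One clarification is worth making. You identify as the ``main obstacle'' showing that $u_k(D_z)\subset V$ for a suitable neighborhood $V\subset W$ in which $\phi(L)\cap V$ is embedded. This is both stronger than needed and, as stated, not quite correct: near a self-intersection of the immersion, $\phi(L)\cap V$ will consist of several branches no matter how small $V$ is, so it will not be an embedded submanifold of $W$. The paper sidesteps this by working upstairs in $L$ rather than downstairs in $W$: one covers the limit boundary lift $u^L_\infty(\partial K)$ by finitely many open sets $V_j\subset L$ on each of which $\phi$ is injective, and then uses only the $C^0$--convergence of the lifts $u_k^L$ to ensure that, for large $k$, the boundary of $u_k$ lies on the single embedded sheet $\phi(V_j)$. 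No control of the interior image $u_k(D_z)$ is required, and no Schwarz-lemma-type argument is needed; the reduction to \cite[Theorem~4.1.1]{McDuffSalamonJHolo} is then immediate.
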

\begin{proof}
  The theorem is well-known in case that $\partial\Sigma = \emptyset$
  or that $\phi(L)$ is an embedded totally real submanifold (see for
  example \cite[Theorem~4.1.1]{McDuffSalamonJHolo}).  In fact, our
  situation can be reduced to one, where we can apply this standard
  result.  Using Arzelà-Ascoli it is easy to find a subsequence $u_k$
  that converges uniformly in $C^0$ on any compact set to a continuous
  map $u_\infty$, and such that the lifts $u^L_k:\, \partial\Omega_k
  \to L$ converge in $C^0$ on any compact set to a lift $u^L_\infty:\,
  \partial \Sigma \to L$ with $\phi\circ u^L_\infty =
  \left.u_\infty\right|_{\partial\Sigma}$.

  Let $K \subset \Sigma$ be a compact set on which we want to show
  uniform $C^\infty$--convergence.  If $\partial K:= K\cap
  \partial \Sigma$ is empty, then the uniform converges for the
  derivatives follows from the standard result.  If $\partial K$ is
  non-empty, then cover $u^L_\infty(\partial K)$ with a finite
  collection of open sets $V_1,\dotsc,V_N$ on each of which $\phi$ is
  injective.  We can choose smaller open subsets $V_j' \subset V_j$
  whose closure $\overline{V_j'}$ is also contained in $V_j$, and
  whose union $V_1'\cup \dotsm \cup V_N'$ still cover
  $u^L_\infty(\partial K)$.

  Cover also $K$ itself with open sets $U_k$ that either do not
  intersect the boundary $\partial K$ or if $U_k\cap \partial K \ne
  \emptyset$, then there is a $V_j'$ such that $u^L_\infty(U_k\cap
  \partial K)\subset V_j'$.  Only finitely many $U_k$ are needed to
  cover $K$.  We get for every $U_k\cap K$ uniform
  $C^\infty$--convergence, because if $U_k$ intersects now $\partial
  K$ we can use the standard result: For $n$ large enough $u_n(U_k\cap
  \partial K\cap \Omega_n)$ will be contained in the larger subset
  $V_j$, on which $\phi$ is an embedding.
\end{proof}

\begin{theorem}[Gromov compactness]
  Let
  \begin{equation*}
    u_n:\, (\Disk,\partial \Disk) \to (W,\GPS)
  \end{equation*}
  be a sequence of holomorphic disks that represent elements in the
  moduli space $\moduli_\gamma$.

  There exists a family $\phi_n:\,\Disk\to \Disk$ of biholomorphisms
  such that $u_n\circ \phi_n$ contains a subsequence converging
  uniformly in $C^\infty$ to a holomorphic disk
 \begin{equation*}
    u_\infty:\,(\Disk,\partial \Disk) \to (W,\GPS)
 \end{equation*}
  that represents again an element in $\moduli_\gamma$.
\end{theorem}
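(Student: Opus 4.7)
The plan is to combine a uniform energy bound with a Hofer-type rescaling argument: if the gradients blow up, I would analyse the resulting bubble and rule out all possibilities except a single boundary disk, which can then be recaptured by a Möbius reparametrization of the original domain.

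First, for any $u \in \moduli_\gamma$, Stokes together with $\omega = d\lambda$ and $\lambda|_{TM} = \alpha$ gives
\[
E(u) = \int_\Disk u^*\omega = \int_{\partial \Disk} u^*\alpha \leq 2\pi \max_{r \in [0,1]} f(r),
\]
since $\Phi^*\alpha = f(r)\,d\phi$ on the GPS with $f \geq 0$ and the boundary wraps the leaf parameter exactly once (this uses the restriction to the connected component containing the Bishop family). If $\sup_n \|Du_n\|_\infty < \infty$ there is nothing to rescale: $\phi_n = \id$ together with Theorem \ref{theorem: convergence for bounded maps} delivers a $C^\infty$-convergent subsequence whose limit has marked point still on the closed curve $\gamma$ and whose boundary still lifts smoothly (by uniqueness of lifts, Section \ref{sec: moduli space}), hence lies in $\moduli_\gamma$.

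If the gradients blow up, I would apply Hofer's lemma to produce points $z_n \in \Disk$ with $R_n := \|Du_n(z_n)\| \to \infty$ and radii $\epsilon_n \to 0$ satisfying $\epsilon_n R_n \to \infty$ and $\|Du_n\| \leq 2 R_n$ on $B_{\epsilon_n}(z_n)\cap \Disk$. Passing to a subsequence there are two sub-cases. If $R_n \cdot \mathrm{dist}(z_n, \partial \Disk) \to \infty$, the rescaled maps $v_n(w) := u_n(z_n + w/R_n)$ live on exhausting balls in $\C$ with uniformly bounded derivatives, so Theorem \ref{theorem: convergence for bounded maps} yields a non-constant holomorphic plane of finite energy; removal of singularity extends it to a non-constant $J$-holomorphic sphere $\S^2 \to W$, whose energy must vanish by exactness of $\omega = d\lambda$, a contradiction. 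In the other sub-case $R_n \cdot \mathrm{dist}(z_n, \partial \Disk)$ stays bounded, so a Cayley-conjugate rescaling in $\HyperbolicPlane$ produces a non-constant boundary bubble $v_\infty : (\HyperbolicPlane, \partial \HyperbolicPlane) \to (W, \phi(\GPS))$ of finite energy, which after removable boundary singularity becomes a genuine holomorphic disk. To rule out several such bubbles I would invoke the leaf projection $\theta$ of Remark \ref{remark: label for leaves} and Corollary \ref{kurven transvers zu rand}: the smooth part of $\partial v_\infty$ meets every leaf positively, so $\deg(\theta \circ v_\infty|_{\partial \Disk}) \geq 1$; but each $\theta \circ u_n|_{\partial \Disk}$ is a degree-$1$ diffeomorphism and in the bubble-tree limit the boundary degrees sum to $1$. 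Hence there is exactly one bubble, of degree $1$, and the "main" component is constant. I would then choose $\phi_n \in \mathrm{Aut}(\Disk)$ to be the Möbius maps dilating $B_{\epsilon_n}(z_n)$ out to fill $\Disk$ (Cayley-conjugates of the translation/dilation used to extract $v_\infty$); by construction $\{u_n \circ \phi_n\}$ then has uniformly bounded derivatives on all of $\Disk$, and a final invocation of Theorem \ref{theorem: convergence for bounded maps} produces the $C^\infty$-convergent subsequence with limit $u_\infty = v_\infty \in \moduli_\gamma$.

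The main obstacle will be the boundary-bubble analysis: besides the degree count (which is clean only because of the Bishop-component restriction), one has to verify that the bubble's a priori merely continuous boundary lifts smoothly to $S \times \Disk$. This reduces to a removable-boundary-singularity statement at punctures lying on an immersed totally real submanifold; the degree-$1$ condition pins the puncture to a single local sheet of $\phi$, which is exactly the simplification over the general clean-intersection result of \cite{BubblingImmersedBoundary} that the author alludes to.
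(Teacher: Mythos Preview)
Your outline and the paper's proof share the same skeleton: uniform energy bound, Hofer rescaling, exclusion of sphere bubbles by exactness, and a degree argument via the leaf projection $\theta$ to rule out multiple boundary bubbles. The difference lies in how the endgame is executed. You extract a boundary bubble $v_\infty$ on $\HyperbolicPlane$, appeal to a full removable-boundary-singularity theorem to close it up into a smooth disk, use the degree count on the bubble tree to conclude there is exactly one bubble, and then assert ``by construction'' that a single M\"obius reparametrization gives global $C^1$ bounds. The paper never proves that $v_\infty$ extends smoothly across the puncture; instead it proves only the weak statement (Theorem~\ref{theorem: removal singularity}) that the leaf label $\theta\circ v_\infty$ extends \emph{continuously}. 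From this it deduces that after one rescaling the boundary of $u_k\circ\psi_k$ already hits almost every leaf on a fixed arc $K\subset\partial\Disk$; if the gradient still blew up near the remaining point $-1$, the same argument on the complementary arc $K'$ would force $\theta\circ u_k|_{\partial\Disk}$ to cover $\S^1$ nearly twice for large $k$, contradicting degree~$1$. Hence the gradients are bounded everywhere after one rescaling, and Theorem~\ref{theorem: convergence for bounded maps} finishes.

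The practical upshot is that the obstacle you flag in your last paragraph---a removable-singularity statement at a puncture sitting on an \emph{immersed} totally real boundary---is precisely what the paper's route avoids. Your ``by construction'' step hides the same work: without knowing that the single bubble actually closes up smoothly (or without an independent argument that no secondary blowup occurs near the puncture), you cannot conclude that one M\"obius map yields uniform $C^1$ bounds on all of $\Disk$. The paper's weak removal of singularity plus the leaf-coverage contradiction supplies exactly that independent argument, and is what you should substitute for your appeal to the full bubble-tree picture.
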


\begin{proof}
  Choose an arbitrary $J$--compatible metric on $W$, and endow the
  disk $\Disk\subset\C$ with the standard metric $g_0$ on the complex
  plane.  Denote by $\norm{Du_k(z)}_{\Disk}$ the norm of the
  differential of $u_k$ at a point $z\in\Disk$ with respect to $g_0$
  on the disk, and the chosen metric on $W$.  If $\norm{Du_k}_{\Disk}$
  is uniformly bounded for all $k\in\N$ and all $z\in \Disk$, then by
  Theorem~\ref{theorem: convergence for bounded maps} we are done.

  So assume this to be false, then there exists (by going to a
  subsequence if necessary) a sequence $z_k\in\Disk$ such that
  \begin{equation*}
    \norm{Du_k(z_k)}_{\Disk} \to \infty \;,
  \end{equation*}
  and in fact by using rotations, we may assume that all $z_k$ lie on
  the interval $[0,1]$.

  Let $\HyperbolicPlane \subset \C$ be the upper half plane $\{z|\,
  \ImaginaryPart z \ge 0\}$ endowed with the standard metric, and
  denote by $\norm{Dv(z)}_{\HyperbolicPlane}$ the norm of the
  differential of a map $v:\,\HyperbolicPlane \to W$ at a point
  $z\in\HyperbolicPlane$ with respect to the standard metric on the
  half plane, and the chosen metric on $W$.  We can map the half plane
  into the unit disk using the biholomorphism
  \begin{equation*}
    \Phi: \, \HyperbolicPlane \to \Disk -\{-1\}, \,
    z\mapsto \frac{i - z}{i+z} \;,
  \end{equation*}
  and use this to pull-back the sequence of disks to
  $u_k^{\HyperbolicPlane} := u_k\circ \Phi:\, (\HyperbolicPlane,\R)
  \to (W,\GPS)$.  The map $\Phi$ is not an isometry, but on compact
  sets of the upper half plane, $\Phi^*g_0$ is equivalent to the
  standard metric.  Hence it follows that also $\norm{D
    u_k^{\HyperbolicPlane}}_{\HyperbolicPlane}$ cannot be bounded on
  the segment $I := \{it|\, t\in[0,1]\}$.  Let $x_k\in I$ be a point
  where $\norm{D u_k^{\HyperbolicPlane}}_{\HyperbolicPlane}$ takes its
  maximum on $I$.

  Apply the Hofer Lemma (see for example
  \cite[Lemma~4.6.4]{McDuffSalamonJHolo}) for fixed $k$, and $\delta =
  1/2$, that means, restrict $\norm{D
    u_k^{\HyperbolicPlane}}_{\HyperbolicPlane}$ to the unit disk
  $\Disk(x_k) \cap \HyperbolicPlane$.  There is a positive
  $\epsilon_k$ with $\epsilon_k \le 1/2$, and a $y_k \in
  \Disk[1/2](x_k)\cap \HyperbolicPlane$ such that
  \begin{align*}
    \norm{D u_k^{\HyperbolicPlane}(x_k)}_{\HyperbolicPlane} &\le
    2\epsilon_k\, \norm{D
      u_k^{\HyperbolicPlane}(y_k)}_{\HyperbolicPlane} \intertext{and}
    \norm{D u_k^{\HyperbolicPlane}(z)}_{\HyperbolicPlane} &\le
    2\,\norm{D u_k^{\HyperbolicPlane}(y_k)}_{\HyperbolicPlane}
  \end{align*}
  for all $z\in\Disk[\epsilon_k](y_k)\cap \HyperbolicPlane$.

  Set $c_k := \norm{D u_k^{\HyperbolicPlane}
    (y_k)}_{\HyperbolicPlane}$.  First we will show that for large
  $k$, all the disks $\Disk_{\epsilon_k}(y_k)$ intersect the boundary
  $\partial\HyperbolicPlane = \R$ of the half plane.  Even stricter,
  there exists a constant $K>0$ such that $c_k\,\ImaginaryPart(y_k) <
  K$ for all $k$ (if the disks intersect the real line, we have
  $\ImaginaryPart y_k < \epsilon_k$, multiplying with $c_k$ on both
  sides would still allow the left side to be unbounded).  Suppose
  that such a constant did not exist, so that by going to a
  subsequence, $c_k \ImaginaryPart y_k$ converges monotonously to
  $\infty$.  Define $H_k:= \{z\in\C|\,\ImaginaryPart z \ge
  -c_k\ImaginaryPart y_k\}$, and a sequence of biholomorphisms
  \begin{equation*}
    \phi_k:\, \Disk_{\epsilon_kc_k} \cap H_k \to \Disk[\epsilon_k](y_k)
    \cap \HyperbolicPlane, \, z \mapsto y_k + \frac{z}{c_k}\;.
  \end{equation*}
  Pulling back, we find holomorphic maps $\widehat u_k :=
  u_k^{\HyperbolicPlane}\circ\phi_k:\, \Disk_{\epsilon_kc_k} \cap H_k
  \to W$ with $\norm{D\widehat u_k(0)} = 1$, and $\norm{D\widehat u_k}
  \le 2$ everywhere else.  Using Theorem~\ref{theorem: convergence for
    bounded maps} (or just for example
  \cite[Theorem~4.1.1]{McDuffSalamonJHolo}), proves that there exists
  a subsequence that converges locally uniformly with all derivatives
  to a non-constant map $\widehat u_\infty:\,\C\to W$.  The standard
  removal of singularity theorem yields then a non-constant
  holomorphic sphere, which cannot exist in an exact symplectic
  manifold.  Thus there is a constant $K>0$ such that
  $c_k\,\ImaginaryPart(y_k) < K$.

  Now we slightly modify the charts used above to keep the boundary of
  the reparametrized domains on the height of the real line.  Set
  $y_k':= c_k\ImaginaryPart y_k$ and $r_k:= \epsilon_k c_k$, and
  consider the following sequence of biholomorphisms
  \begin{equation*}
    \psi_k:\, \Disk_{r_k}(iy_k') \cap \HyperbolicPlane
    \to \Disk[\epsilon_k](y_k) \cap \HyperbolicPlane, \,
    z \mapsto \frac{z}{c_k} + \RealPart y_k \;.
  \end{equation*}
  Note that the intersection of $\Disk[\epsilon_k](y_k)$ with the real
  line is given by the interval
  \begin{equation*}
    \Disk[\epsilon_k](y_k) \cap \R = \Disk(x_k) \cap \R \subset (-1,1) \;.
  \end{equation*}
  The image of the interval $(-1,1)$ under $\Phi$ is the segment on
  the boundary of the unit disk enclosed between the angles $(-\pi/2,
  \pi/2)$.  This means that the boundary part of the disk that is
  affected by the reparametrization lies on the right half of the
  complex plane.

  On the domain of the reparametrized maps $\widehat u_k :=
  u_k^{\HyperbolicPlane}\circ\psi_k:\, \Disk_{r_k}(i y_k') \cap
  \HyperbolicPlane \to W$ we have $\norm{D\widehat u_k} \le 2$, and
  $\norm{D\widehat u_k(i y_k')} = 1$.  We can also find a subsequence
  of $\widehat u_k$ with increasing domains, i.e., $\Disk_{r_k}(i
  y_k') \subset \Disk_{r_l}(i y_l')$ for all $l\ge k$, by using that
  the $y_k'$ are all bounded while the radii of the disks $r_k$ become
  arbitrarily large.  Then Theorem~\ref{theorem: convergence for
    bounded maps} provides a subsequence of the $\widehat u_k$ that
  converges locally uniformly with all derivatives to a holomorphic
  map $\widehat u_\infty:\,(\HyperbolicPlane,\R)\to (W,\GPS)$.  To see
  that $\widehat u_\infty$ is not constant, take a subsequence such
  that $y_k'$ converges to $y_\infty'$.  The norm of the derivative of
  $u_\infty$ at $iy_\infty$ is $\norm{D \widehat u_\infty(i
    y_\infty')} = 1$, because $\norm{D \widehat u_\infty (i y_\infty')
    - D\widehat u_k(i y_k')} \le \norm{D\widehat u_\infty(i y_\infty')
    - D\widehat u_\infty (i y_k')} + \norm{D\widehat u_\infty(i y_k')
    - D\widehat u_k (i y_k')}$ becomes arbitrarily small.  The first
  term is small, because the differential of $\widehat u_\infty$ is
  continuous, the second can be estimated by using that the
  convergence of $\widehat u_k$ to $\widehat u_\infty$ is uniform on a
  small compact neighborhood of $i y_\infty'$.

  Let us come back to the initial family of disks $u_k:\,
  (\Disk,\partial \Disk) \to (W,\GPS)$.  The maps $\psi_k$ induce
  reparametrizations of the whole disk by $\Phi\circ \psi_k\circ
  \Phi^{-1}$.  The image of a compact subset of $\Disk - \{-1\}$ under
  $\Phi^{-1}$ is a compact subset in $\HyperbolicPlane$, so that we
  get on any compact subset of $\Disk - \{-1\}$ uniform
  $C^\infty$--convergence of $u_k\circ \psi_k$ to $u_\infty :=
  \widehat u_\infty\circ \Phi^{-1}$.  To complete the proof of our
  compactness theorem, we have to show that the first derivatives of
  $u_k\circ \psi_k$ are also uniformly bounded in a neighborhood of
  $\{-1\}$.

  Rotate the disk $\Disk$ by multiplying its points by $e^{i\pi}= -1$
  such that $-1$ lies at $1$.  Then the holomorphic curve
  \begin{equation*}
    \bigl(\HyperbolicPlane-\{0\}, (-\infty,0)\cup (0,\infty)\bigr) \to
    (W,\GPS), z \mapsto u_\infty\bigl(-\Phi(z)\bigr)
  \end{equation*}
  has finite energy, and we can apply the removal of singularity
  theorem in the form described in Theorem~\ref{theorem: removal
    singularity}.  The consequence for $u_\infty$ is that the
  composition $\left.\theta\circ u_\infty\right|_{\partial \Disk
    -\{-1\}}$ extends to a continuous map $\S^1 \to \S^1$ that is
  strictly monotonous.  In fact, $\left.\theta\circ
    u_\infty\right|_{\partial \Disk -\{-1\}}$ covers the whole circle
  with exception of the point
  \begin{equation*}
    e^{i\,\phi_\infty} := \lim_{e^{i\phi}\to -1}
    \theta\circ u_\infty(e^{i\phi})\;,
  \end{equation*}
  and so for any $\epsilon$--neighborhood $U_\epsilon \subset \S^1$ of
  $e^{i\,\phi_\infty}$, we find a $\delta>0$ such that $\{\theta\circ
  u_k\circ\psi_k(e^{i\phi})|\,\phi \in (-\pi/2 + \delta, \pi/2 -
  \delta) \}$ covers for any sufficiently large $k$ the complement
  $\S^1 - U_\epsilon$ of $U_\epsilon$.  Let $K$ be the segment
  $\{e^{i\phi}|\, \phi\in(-\pi/2,\pi/2)\}$.  Remember that the images
  of $K$ exhausts $\partial\Disk - \{-1\}$ if we apply the
  reparametrizations $\psi_k$, and so it follows in particular that
  the unparametrized disks $u_k:\, (\Disk,\partial \Disk) \to
  (W,\GPS)$ intersect on $K$ for sufficiently large $k$ almost all
  leaves of the foliation of the GPS.

  Assume now that the first derivatives of the $u_k\circ \psi_k$ are
  not uniformly bounded in a neighborhood of $\{-1\}$.  By the same
  reasoning, it follows that the $u_k\circ \psi_k$ intersect almost
  all leaves of the GPS on the segment $K' = \{e^{i\phi}|\,
  \phi\in(\pi/2,\pi/3)\}$, but this yields a contradiction.
\end{proof}

In our special situation, we only need the following very weak form of
removal of singularity, which states that a holomorphic curve that has
a puncture on its boundary approaches the same leaf of the foliation
from both sides of the puncture.

\begin{theorem}[Removal of singularity]\label{theorem: removal
    singularity}
  Let $(W, \omega)$ be a compact manifold with exact symplectic form
  $\omega = d\alpha$, and with convex boundary $\partial W =
  (M,\alpha)$.  Assume that $M$ contains a GPS $\phi:\, S\times \Disk
  \looparrowright M$, and choose an adapted almost complex structure
  $J$ on $W$.  Assume
  \begin{equation*}
    u:\, \Bigl(\Disk[\epsilon] \cap \HyperbolicPlane - \{0\},
    (-\epsilon,0) \cup (0,\epsilon)\Bigr) \to \bigl(W,\GPS\bigr)
  \end{equation*}
  to be a non-constant holomorphic curve that has finite energy.
  Recall that there was a continuous map $\theta:\, \GPS^* \to \S^1$
  that labels the leaves of the foliation on the GPS.

  We find a continuous path $\widehat c:\, (-\epsilon, \epsilon) \to
  \S^1$ with
  \begin{equation*}
    \left.\widehat c\right|_{(-\epsilon,0) \cup (0,\epsilon)} =
    \left.\theta\circ u\right|_{(-\epsilon,0) \cup (0,\epsilon)}\;.
  \end{equation*}
  A more geometric way to state this result is to say that the
  boundary segments of the holomorphic curve approach from both sides
  of $0$ asymptotically the same leaf.
\end{theorem}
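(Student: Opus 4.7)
The plan is to combine the standard boundary removal-of-singularity theorem, applied in the local Fredholm setup of Section~\ref{sec: moduli space}, with the labelling structure intrinsic to the GPS. First I would pass to conformal half-strip coordinates via the biholomorphism $z=e^{-(s+it)}$, turning the punctured half-disk into a half-strip $[s_0,\infty)\times[0,\pi]$ with both boundary arcs mapped into $\GPS$ and the puncture corresponding to $s\to\infty$. The reparameterised curve $\tilde u(s,t) := u\bigl(e^{-(s+it)}\bigr)$ takes values in the compact manifold $W$ and has finite energy, and by Corollary~\ref{kurven transvers zu rand} the interior of the half-strip maps into $W\setminus\partial W$, so every limit point of $\tilde u(s,\cdot)$ as $s\to\infty$ lies on the compact image of $\GPS$. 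The blocking statement of Section~\ref{sec: boundary of GPS} then forbids any such limit from lying in a neighbourhood of $\partial\GPS$.

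Next I would apply standard boundary removal of singularity. Choose a small open set $U\subset W$ around the cluster set on which the immersion $\phi\colon S\times\Disk\looparrowright M$ restricts to an embedding; this is possible away from $\partial\GPS$ and from the core $S\times\{0\}$, since self-intersections of the GPS occur only on the same leaf (Remark~\ref{remark: label for leaves}) and are globally controlled. In such a chart, the framework of Section~\ref{sec: moduli space} makes $\GPS$ a genuine totally real submanifold of $W$, so the classical extension theorem (the boundary version of \cite[Theorem~4.1.2]{McDuffSalamonJHolo} combined with elliptic regularity) produces a continuous, in fact $C^\infty$, extension of $u$ across $0$ with $u(0)=:p\in\GPS$.

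To identify $\widehat c$, observe that if $p$ lies in $\GPS^*$, or even at a self-intersection point of the GPS, then by the defining property of a GPS the leaf label $\theta$ is single-valued and continuous at $p$; setting $\widehat c(x):=\theta\bigl(u(x)\bigr)$ for $x\ne 0$ and $\widehat c(0):=\theta(p)$ then yields the desired continuous path $(-\epsilon,\epsilon)\to\S^1$, and geometrically both boundary arcs of $u$ approach the same leaf asymptotically.

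The main obstacle is the case in which the limit point $p$ lies on the core $S\times\{0\}$: there $\theta$ is undefined, and a generic smooth lift of the boundary through $r=0$ could a priori produce a jump of $\pi$ in the angular coordinate. I would rule this case out by combining the exponential decay of $\norm{\partial_s\tilde u}$ coming from finite energy with the neighbourhood analysis of the core carried out in \cite[Section~3]{NiederkruegerPlastikstufe}, which forces any non-trivial finite-energy holomorphic disk approaching a core point to belong to the Bishop family, and hence to carry no puncture at all.
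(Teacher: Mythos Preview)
Your approach aims at a much stronger statement---continuous (even $C^\infty$) extension of $u$ across the puncture---and then reads off the continuity of $\theta\circ u$ from that.  The paper proceeds far more directly: it never extends $u$ at all.  Writing $\gamma_r$ for the image of the half-circle of radius $r$, the Cauchy--Schwarz inequality gives
\[
  E(u)\ \ge\ \int_0^\epsilon \frac{L(\gamma_r)^2}{2\pi r}\,dr,
\]
and since Corollary~\ref{kurven transvers zu rand} forces $\theta\circ u$ to be strictly increasing on each boundary arc, the two one-sided limits $\widehat c_\pm(0)$ exist; if they differed, the endpoints $u(-r)$ and $u(r)$ would stay a fixed positive distance $C$ apart for all small $r$, hence $L(\gamma_r)\ge C$, contradicting the finiteness of the integral above.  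That is the whole argument: no removal of singularity, no case distinction for the core, no need to know where the cluster set sits.

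Your proposal has a genuine gap in the step where you ``choose a small open set $U\subset W$ around the cluster set on which the immersion $\phi$ restricts to an embedding''.  The fact that self-intersections occur only between points on the same leaf means that $\theta$ descends to the image, but it does \emph{not} mean the image is locally a single sheet: the GPS may well have two branches through a point $p\in\Phi(\GPS^*)$, and there is then no neighbourhood of $p$ on which $\phi$ is injective.  Moreover, even if such a chart existed, you would first have to prove that the cluster set of $u$ at $0$ is small enough to fit inside it, and the only tool available for that is precisely the length estimate above---so you would in effect have reproduced the paper's proof before being able to invoke the black-box removal theorem.  Your handling of the core case is likewise only sketched and would, if carried out, require importing substantial machinery (exponential decay, classification of disks near the core) that the direct argument avoids entirely.
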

\begin{proof}
  One of the basic ingredients in all proofs of this type is the
  following estimate for the energy of $u$
  \begin{equation*}
    E(u) = \int_{\Disk[\epsilon]\cap\HyperbolicPlane-\{0\}} u^*\omega
    = \int_0^\epsilon \int_{\gamma_r} \frac{\abs{\partial_\phi u}^2}{r^2}\,
    r\, dr\wedge d\phi \\
    \ge  \int_0^\epsilon \left(\int_{\gamma_r} \abs{\partial_\phi u}\, d\phi\right)^2
    \, \frac{dr}{2\pi r} \\
    = \int_0^\epsilon  \frac{L(\gamma_r)^2}{2\pi r} \, dr \;,
  \end{equation*}
  where $\gamma_r$ is the image $\bigl\{u(re^{i\phi})\bigm|\,
  \phi\in[0,\pi]\bigr\}$ of the half-circle of radius $r$ in the
  hyperbolic plane, and $L(\gamma_r)$ is its length with respect to
  the compatible metric on $W$.  It is clear that $L(\gamma_r)$ cannot
  be bounded from below, because the energy $E(u)$ is finite.

  Denote the segments composing the map $\left.\theta\circ
    u\right|_{(-\epsilon,0) \cup (0,\epsilon)}$ by $c_-:\,
  (-\epsilon,0) \to \S^1$, and $c_+:\, (0,\epsilon) \to \S^1$.  By
  Corollary~\ref{kurven transvers zu rand}, both maps $c_\pm$ are
  strictly increasing.

  It easily follows that the $c_\pm$ are bounded close to $0$ (in the
  sense that they do not turn infinitely often as $z\to\infty$),
  because there is a sequence of radii $r_k$ with $r_k \to 0$ such
  that $L(\gamma_{r_k})\to 0$.  Denote $(\Disk[r_1] - \Disk[r_k])\cap
  \HyperbolicPlane$ by $D(r_1,r_k)$.  Then
  \begin{equation*}
    E\bigl(\left.u\right|_{D(r_1,r_k)}\bigr) =
    \int_{\partial D(r_1,r_k)} u^*\alpha
    \ge \int_{\text{\makebox[1.5cm][l]{$[-r_1,-r_k]\cup [r_k,r_1]$}}} u^*\alpha
    - \bigl(L(\gamma_{r_1}) + L(\gamma_{r_k})\bigr)\, \max \norm{\alpha}
    \to \infty\;.
  \end{equation*}
  It follows that we find continuous extensions $\widehat c_-:\,
  (-\epsilon,0] \to \S^1$, and $\widehat c_+:\, [0,\epsilon) \to
  \S^1$.  If $\widehat c_-(0) = \widehat c_+(0)$, we are done, so
  assume these limits to be different.  Choose a small $\delta > 0$,
  such that the $\delta$--neighborhoods $U_-, U_+\subset\S^1$ around
  $\widehat c_-(0)$ and $\widehat c_+(0)$ respectively do not overlap.
  There is an $\epsilon'>0$ for which the segment $[0,\epsilon')$ is
  contained in $\widehat c_+^{-1}(U_+)$, and $(-\epsilon',0]$ is
  contained in $\widehat c_-^{-1}(U_-)$, and all the points in
  $u\bigl((0,\epsilon')\bigr)$ are at distance more than $C>0$ from
  the points in $u\bigl((-\epsilon',0)\bigr)$.  In particular it
  follows that the length $L(\gamma_r)$ for any $r\in(0,\epsilon')$ is
  larger than $C$, and so by the energy inequality at the beginning of
  the proof, we get a contradiction to $\widehat c_-(0) \ne \widehat
  c_+(0)$.
\end{proof}

\bibliographystyle{amsalpha}

\section{Outlook and open questions}\label{sec: open questions}

One obvious application of the observations made in this paper is the
definition of a capacity invariant for contact manifolds.
Unfortunately, we were not able to measure the ``size'' efficiently in
a numerical way so that our invariant is rather rough.

To measure the capacity, we choose a contact manifold $(N,\xi_N)$ that
will serve as the ``testing probe''.

Then we can define for any contact manifold $(M,\xi)$ with $\dim M =
2k + \dim N$, and $k\ge 1$, an invariant $C_{\xi_N}$ defined as
follows
\begin{equation*} C_{\xi_N}(M,\xi) =
  \begin{cases}
    0 &\quad \text{$(N,\xi_N)$ cannot be embedded with trivial normal bundle into $M$;} \\
    \infty & \text{$N\times \R^{2k}$ with the standard contact form
      can be embedded into $M$;} \\
    1 & \text{otherwise, that means $(N,\xi_N)$ can be embedded with
      trivial normal}\\ &\text{ bundle into $M$, but not with the full
      neighborhood.}
  \end{cases}
\end{equation*}

This way, we obtain for the standard sphere $\bigl(\S^{2n-1},
\xi_0\bigr)$ that $C_{\xi_0}(M,\xi) = \infty$ for any contact manifold
$(M,\xi)$.  If $\bigl(N,\xi_-\bigr)$ is an overtwisted contact
$3$--manifold, and if $(M,\xi)$ is a manifold with exact symplectic
filling, then $C_{\xi_-}(M,\xi) < \infty$.

The most important problem in this context would be to find examples
of contact manifolds that do allow the embedding of an overtwisted
contact manifold $N$ with the full model neighborhood
$N\times\R^{2k}$, because otherwise it is so far unclear whether the
capacity $C_{\xi_N}$ is able to distinguish any manifolds.  Possible
candidates to check are the following:

\begin{question}
  Let $(M,\alpha)$ be a closed contact manifold.  Bourgeois described
  in \cite{BourgeoisTori} a construction of a contact structure on
  $M\times \T^2$ for which every fiber $M\times\{p\}$ with $p\in \T^2$
  is contactomorphic to the initial manifold.  How large is the
  tubular neighborhood of such a fiber?
\end{question}

\begin{question}
  Giroux conjectures that contact manifolds of arbitrary dimension
  obtained from the negative stabilization of an open book should be
  ``overtwisted''.  The simplest example of such a manifold is a
  sphere $(\S^{2n-1}, \alpha_-)$ constructed by taking the cotangent
  bundle $T^*\S^{n-1}$ for the pages, and a negative Dehn-Seidel twist
  as the monodromy map (see Example~\ref{example: branched cover}).
  How large is the tubular neighborhood of $(\S^3,\alpha_-)$ in a
  higher dimensional sphere?
\end{question}

\bibliography{main}


\end{document}